\theoremstyle{plain}
\newtheorem{theorem}{Theorem}[section]
\newtheorem{lemma}[theorem]{Lemma}
\newtheorem{corollary}[theorem]{Corollary}
\newtheorem{proposition}[theorem]{Proposition}
\theoremstyle{definition}
\newtheorem{problem}{Problem}[section]
\theoremstyle{remark}
\newtheorem{remark}{Remark}[section]
\DeclareMathOperator{\tr}{tr}%
\newcommand{\N}{\mathbf N}
\newcommand{\Z}{\mathbf Z}
\newcommand{\R}{\mathbf R}
\newcommand{\T}{\mathbf T}
\newcommand{\f}{\varphi}
\newcommand{\e}{\varepsilon}
\renewcommand{\d}{\mathrm d}
\newcommand{\skp}[2]{\left<#1,#2\right>}
\newcommand{\C}{\mathbf C}
\renewcommand{\a}{\alpha}
\newcommand{\luin}{\left|\!\left|\!\left|}
\newcommand{\ruin}{\right|\!\right|\!\right|}
\newcommand{\ovl}{\overline}
\newcommand{\Sc}{\mathcal C}
\numberwithin{equation}{section} 
\begin{document}
\title[Continuous generalization of Clarkson-McCarthy inequalities]{Continuous generalization of Clarkson-McCarthy inequalities} 

\author{Dragoljub J. Ke\v cki\' c}
\address{University of Belgrade\\ Faculty of Mathematics\\ Student\/ski trg 16-18\\ 11000 Beograd\\ Serbia}

\email{keckic@matf.bg.ac.rs}

\thanks{The author was supported in part by the Ministry of education and science, Republic of Serbia, Grant \#174034.}

\begin{abstract}
Let $G$ be a compact abelian group, let $\mu$ be the corresponding Haar measure, and let $\hat G$ be the Pontryagin dual of $G$. Further, let $C_p$ denote the Schatten class of operators on some separable infinite dimensional Hilbert space, and let $L^p(G;C_p)$ denote the corresponding Bochner space. If $G\ni\theta\mapsto A_\theta$ is the mapping belonging to $L^p(G;C_p)$ then,
$$\sum_{k\in\hat G}\left|\left|\int_G\ovl{k(\theta)}A_\theta\d\theta\right|\right|_p^p\le
    \int_G||A_\theta||_p^p\d\theta,\qquad p\ge2$$
$$\sum_{k\in\hat G}\left|\left|\int_G\ovl{k(\theta)}A_\theta\d\theta\right|\right|_p^p\le
    \left(\int_G||A_\theta||_p^q\d\theta\right)^{p/q},\qquad p\ge2.$$
$$\sum_{k\in\hat G}\left|\left|\int_G\ovl{k(\theta)}A_\theta\d\theta\right|\right|_p^q\le
    \left(\int_G||A_\theta||_p^p\d\theta\right)^{q/p},\qquad p\le2.$$

If $G$ is a finite group, the previous comprises several earlier obtained generalizations of Clarkson-McCarthy inequalities (e.g.\ $G=\Z_n$ or $G=\Z_2^n$), as well as the original inequalities, for $G=\Z_2$.

Other related inequalities are also obtained.
\end{abstract}


\subjclass[2010]{Primary: 47A30, Secondary: 47B10, 43A25}

\keywords{Clarkson inequalities, unitarily invariant
norm, abstract Fourier series, finite group, Littlewood matrices}

\maketitle

\section{Introduction}

Investigating uniformly convex spaces, Clarkson \cite{Clarkson} proved the following inequalities for $L^p$ norms:
\begin{equation}\label{Lp>2}
\begin{gathered}
2(\|f\|_p^p+\|g\|_p^p)\le\|f+g\|_p^p+\|f-g\|_p^p\le2^{p-1}(\|f\|_p^p+\|g\|_p^p),\qquad p\ge2\\
2(\|f\|_p^p+\|g\|_p^p)\ge\|f+g\|_p^p+\|f-g\|_p^p\ge2^{p-1}(\|f\|_p^p+\|g\|_p^p),\qquad p\le2
\end{gathered}
\end{equation}
\begin{equation}\label{Lp<2alt}
\|f+g\|_p^p+\|f-g\|_p^p\le2(\|f\|_p^q+\|g\|_p^q)^{p/q},\qquad p\ge 2,\; q=p/(p-1).
\end{equation}
\begin{equation}\label{Lp<2}
\|f+g\|_p^q+\|f-g\|_p^q\le2^{q-1}(\|f\|_p^p+\|g\|_p^p)^{q/p},\qquad p\le 2,\; q=p/(p-1).
\end{equation}

Later McCarthy \cite{McCarthy} generalized these inequalities to Schatten classes of operators. He replaced measurable functions $f$ and $g$ by compact operators $A$ and $B$, and $L^p$ norm by $C_p$ norm defined as
$$\|A\|_p=\left(\tr(|A|^p)\right)^{1/p}.$$
The inequalities he obtained was exactly the inequalities (\ref{Lp>2}), (\ref{Lp<2alt}) and (\ref{Lp<2}). In operator framework, these inequalities are usually referred as Clarkson-McCarthy inequalities. In what follows, we shall use the abbreviation CMC.

There were many generalization of CMC inequalities. Among others, Bhatia and Kittaneh \cite{BhatiaFudo}, proved the following inequalities for $n$-tuple of operators:
\begin{equation}\label{Cpn}
n\sum_{j=0}^{n-1}\|A_j\|_p^p\le\sum_{k=0}^{n-1}\Big\|\sum_{j=0}^{n-1}\omega_j^kA_j\Big\|_p^p\le n^{p-1}\sum_{j=0}^{n-1}\|A_j\|_p^p,
\end{equation}
for $p\ge2$, and corresponding reversed inequalities for $p\le2$, where $\omega_j=e^{2\pi i j/n}$ is the $j$-th degree of the $n$-th root of unity. They, also, proved a stronger inequality:
\begin{equation}\label{BKineq}
n^{-p/2}\luin\sum_{k=0}^{n-1}\left|\sum_{j=0}^{n-1}\omega_j^kA_j\right|^p\ruin\le
\luin\left(\sum_{j=0}^{n-1}|A_j|^2\right)^{p/2}\ruin\le \frac1n\luin\sum_{k=0}^{n-1}\left|\sum_{j=0}^{n-1}\omega_j^kA_j\right|^p\ruin,
\end{equation}
for all unitarily invariant norms $\luin\cdot\ruin$, and same complex numbers $\omega_j$.

After this work there were several further generalizations. Hirzallah and Kittaneh \cite{FudoIEOP} replaced $t\mapsto t^{p/2}$ by an arbitrary convex (concave) function $f$ and obtain
\begin{align}\label{Omer}\luin\sum_{k=0}^{n-1}f\left(\frac1n\Big|\sum_{j=0}^{n-1}\omega_j^kA_j\Big|^2\right)\ruin
\le&\luin f\left(\left(\sum_{j=0}^{n-1}|A_j|^2\right)^{1/2}\right)\ruin\le\\
\le&\frac1n\luin\sum_{k=0}^{n-1}f\left(\Big|\sum_{j=0}^{n-1}\omega_j^kA_j\Big|^2\right)\ruin,\notag
\end{align}
for any convex $f:[0,+\infty)\to[0,+\infty)$ with $f(0)=0$ and any unitarily invariant norm.

The aim of this paper is to generalize the preceding three inequalities, using the argument from \cite{FudoIEOP}, to the framework of compact abelian groups as it is stated in the abstract. The most technical part of the paper is Theorem \ref{ParsevalThm} which establishes the Parseval identity for operator valued abstract Fourier series.

The paper is organized as follows: in section 2, we quote known results concerning abstract harmonic analysis on compact abelian groups, unitaily invariant norms and Bochner integral. Also we derive some minor auxiliary statements. Section 3 is devoted to main results. Finally, in section 4, varying group $G$ we obtain corollaries. For instance, choosing $G=\Z_n$ we obtain (\ref{Cpn}), (\ref{BKineq}) and  (\ref{Omer}), choosing $G=\Z_2$ we obtain classic CMC inequalities, choosing $G=\Z_2^n$ we obtain a generalization of some results from \cite{Kato} and \cite{Japonians}, whereas for other choices of $G$ we get completely new results. Finally, in the last section we list problems that naturally arises from this work.

\section{Preliminaries}

\subsection*{Compact abelian groups} Let us recall some basic facts on abstract harmonic analysis on compact abelian groups. For more details the reader is referred to \cite{Edwards} or \cite{Folland}.

For any locally compact abelian topological group there is a left (and also right) invariant regular Borel measure $\mu$ which is unique up to multiplication by a positive scalar. This measure is known as \emph{Haar measure}. If $G$ is, moreover, compact then $\mu$ is finite and usually normalized such that $\mu(G)=1$.

Haar measure exists for nonabelian locally compact group, as well. In this case it is only left invariant. However, the further theory can not be applied to nonabelian groups. In what follows, $G$ will always be abelian.

A character on $G$ is a continuous homomorphism $k:G\to\T=\{z\in\C\;|\;|z|=1\}$. It is well known that the set of all characters on $G$ equipped with open-compact topology, denoted by $\hat G$, is also a topological group. The group $\hat G$ is called \emph{Pontryagin dual} of $G$. The topology on $\hat G$ is discrete if and only if $G$ is compact.

Throughout this paper $G$ will always denote the \emph{compact} abelian group. The elements of $G$ will be denoted by small Greek letters, $\theta$, $\varphi$, etc. Since its Pontryagin dual $\hat G$ is a discrete group, its elements will be denoted by $k$, $j$, $m$, $n$, etc, and integration with the corresponding Haar measure will be denoted by $\sum$-sign.

For a function $f\in L^1(G)$. the abstract Fourier coefficient of $f$ is given by
\begin{equation}\label{FTransform}
\hat f:\hat G\to\C,\qquad\hat f(k):=\int_G \ovl{k(\theta)}f(\theta)\d\mu(\theta).
\end{equation}
The abstract Fourier series of $f$ is
\begin{equation}\label{FSeries}
\sum_{k\in\hat G}\hat f(k)k(\theta),
\end{equation}
whereas, for finite $\Delta\subseteq\hat G$, the sum
$$S_\Delta(f)=\sum_{k\in\Delta}\hat f(k)k(\theta),$$
is called the partial sum of the expansion (\ref{FSeries}).

We shall use notation $\Delta\to\hat G$ for usual summation over arbitrary families. Namely, $S_\Delta(f)\to f$ (for instance) means that for all $\e>0$ there is a finite $\Delta_0\subseteq\hat G$ such that $d(S_\Delta(f),f)<\e$ for all $\Delta\supseteq\Delta_0$. Any convergence that appears in this note is a convergence within a certain metric space. Hence, we will always be able to choose a sequence $\Delta_n\subseteq\Delta_{n+1}$ such that $S_{\Delta_n}f\to f$.

We shall use the following facts:

\begin{proposition}\label{PropAHA} Let $f\in L^2(G)$. Then

\begin{enumerate}
\item\label{PropAHA1/2} For $k_1$, $k_2\in\hat G$ we have $\int_Gk_1(\theta)\ovl{k_2(\theta)}\d\mu(\theta)=0$ if $k_1\neq k_2$. If $k_1=k_2$ this integral is equal to $1$.

\item\label{PropAHA1} Fourier series converges to $f$ in $L^2$-norm, as $\Delta\to\hat G$, i.e.\ for any $\e>0$, there is a finite $\Delta_0\subseteq\hat G$ such that $\|S_\Delta(f)-f\|_{L^2(G)}<\e$, whenever $\Delta\supseteq\Delta_0$;

\item\label{PropAHA2} The Parseval identity holds: $||\hat f||_{L^2(\hat G)}=||f||_{L^2(G)}$, i.e.
\begin{equation}\label{Parseval}
\sum_{k\in\hat G}|\hat f(k)|^2=\sum_{k\in\hat G}\left|\int_G \ovl{k(\theta)}f(\theta)\d\mu(\theta)\right|^2=\int_G|f(\theta)|^2\d\theta.
\end{equation}
\end{enumerate}

(Notice that $L^2(G)\subseteq L^1(G)$, due to $\mu(G)<+\infty$.)
\end{proposition}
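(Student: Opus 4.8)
The plan is to settle (\ref{PropAHA1/2}) by a direct computation with the Haar measure, and then to obtain (\ref{PropAHA1}) and (\ref{PropAHA2}) as the standard consequences of the \emph{completeness} of the character system in $L^2(G)$.

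First I would prove (\ref{PropAHA1/2}). For fixed $k_1,k_2\in\hat G$ the pointwise product $\chi:=k_1\ovl{k_2}$ is again a character (each $\ovl{k_i}=k_i^{-1}$ is a continuous homomorphism into $\T$, and so is their product), and $\chi$ is the trivial character, i.e.\ the constant function $1$, exactly when $k_1=k_2$. If $k_1\neq k_2$, I pick $\theta_0\in G$ with $\chi(\theta_0)\neq1$; translation invariance of $\mu$ together with $\chi(\theta_0\theta)=\chi(\theta_0)\chi(\theta)$ gives $\int_G\chi(\theta)\d\mu(\theta)=\chi(\theta_0)\int_G\chi(\theta)\d\mu(\theta)$, hence $(1-\chi(\theta_0))\int_G\chi\,\d\mu=0$ and the integral vanishes. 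If $k_1=k_2$ the integrand is identically $1$ and the integral equals $\mu(G)=1$. In particular $\|k\|_{L^2(G)}=1$ for every $k\in\hat G$, so $\{k:k\in\hat G\}$ is an orthonormal family in the Hilbert space $L^2(G)$ and $\hat f(k)=\skp{f}{k}$.

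Now (\ref{PropAHA1}) and (\ref{PropAHA2}) are the textbook facts about an orthonormal system once one knows the system is complete, i.e.\ that the closed linear span of $\hat G$ is all of $L^2(G)$: completeness gives $S_\Delta(f)\to f$ in $L^2$ as $\Delta\to\hat G$, and it gives $\sum_{k\in\hat G}|\hat f(k)|^2=\|f\|_{L^2(G)}^2$. To prove completeness I would consider the linear span $\mathcal{T}\subseteq C(G)$ of $\hat G$; since a product of characters is a character, $\ovl k\in\hat G$, and the trivial character lies in $\hat G$, $\mathcal{T}$ is a unital, conjugation-closed subalgebra of $C(G)$, and it separates the points of $G$. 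By the Stone--Weierstrass theorem $\mathcal{T}$ is dense in $C(G)$ for the uniform norm, and since $\mu(G)<+\infty$ the space $C(G)$ is dense in $L^2(G)$; hence $\mathcal{T}$ is dense in $L^2(G)$, which is completeness.

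The one genuinely non-elementary input is that the characters separate the points of $G$ (equivalently, that $\hat G$ is large enough): for a general compact abelian group this is a manifestation of Pontryagin duality, and I would simply quote it from \cite{Edwards} or \cite{Folland}; alternatively one can quote the $L^2$-completeness of $\hat G$ from those references directly and skip the Stone--Weierstrass step. Everything else is a one-line computation or a standard Hilbert-space argument, so I do not expect any real obstacle beyond locating the right citation.
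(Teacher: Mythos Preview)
Your proposal is correct. The paper does not actually argue the proposition but simply cites \cite[Proposition 4.4 and Corollary 4.7]{Folland} and \cite{Edwards}; your sketch (translation invariance of $\mu$ for the orthogonality relations, then Stone--Weierstrass together with the point-separation of $\hat G$ for completeness in $L^2(G)$) is exactly the standard proof contained in those references, so there is nothing to add.
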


\begin{proof} (\ref{PropAHA1/2}) is \cite[Proposition 4.4]{Folland}, (\ref{PropAHA1}) and (\ref{PropAHA2}) is proved in \cite[Corollary 4.7]{Folland} (see also \cite[\S 2.7.2, \S2.7.3 and \S2.9.1]{Edwards}).
\end{proof}

\subsection*{Unitarily invariant norms}

Let $B(H)$ be the space of all bounded linear operators on a separable complex Hilbert space $H$. The absolute value of an operator $A\in B(H)$ is defined by $|A|=(A^*A)^{1/2}$, and the singular values of $A$, denoted by $s_j(A)$ is defined as eigenvalues of $|A|$ arranged in nonincreasing order counting multiplicity, i.e.\ $s_j(A)=\lambda_j(|A|)$.

A unitarily invariant norm, denoted by $\luin\cdot\ruin$, is a norm defined on a norm ideal $J_{\luin\cdot\ruin}$ in $B(H)$, satisfying the property that $\luin UAV\ruin=\luin A\ruin$ for all operators $A\in J_{\luin\cdot\ruin}$ and all unitary operators $U$, $V\in B(H)$. We also assume that $\luin\cdot\ruin$ is normalized, that is $\luin A\ruin=\|A\|$ for all rank one operators ($\|\cdot\|$ stands for usual operator norm). We shall abbreviate $J_{\luin\cdot\ruin}$ to $J$ if there is no risk of ambiguity. Each unitarily invariant norm $\luin\cdot\ruin$ is a symmetric gauge function of the singular values, and $J_{\luin\cdot\ruin}$ is a Banach space contained in the ideal of compact operators. The only exception are norms equivalent to the usual operator norm which are defined on the whole $B(H)$.

Among all unitarily invariant norms, the most examined are Schatten norms
\begin{equation}\label{SchattenNorms}
\|A\|_p=(\tr|A|^p)^{1/p}=\left(\sum_{j=1}^{+\infty}s^p_j(A)\right)^{1/p},
\end{equation}
where $\tr$ is the usual trace functional and $1\le p<+\infty$. The corresponding ideals will be denoted by $\Sc_p$. We retain definition of $\Sc_p$ for $0<p<1$. Though, for $p<1$, $\|\cdot\|_p$ defined by (\ref{SchattenNorms}) is not norm, but quasinorm. Some results in this paper are valid for $p<1$, e.g.\ Theorem \ref{TheoremPP} and its Corollaries.

The other example are Ky Fan norms
$$\|A\|_{(n)}=\sum_{j=1}^ns_j(A).$$
The importance of the latter is contained in part (\ref{UinProp3}) of the following Proposition where other basic properties of unitarily invariant norms are listed.

\begin{proposition}\label{UinProp} We have
\begin{enumerate}
\item\label{UinProp1/2} For any unitarily invariant norm $\luin\cdot\ruin$ we have $\|A\|\le\luin A\ruin\le\|A\|_1$, where $\|\cdot\|$ is usual operator norm, and $\|\cdot\|_1$ is Schatten $1$-norm.

\item\label{UinProp3} The inequality $\luin A\ruin\le\luin B\ruin$ holds for all unitarily invariant norms if and only if $\|A\|_{(n)}\le\|B\|_{(n)}$ for all $n$. This is known as \emph{Ky Fan dominance property}. The inequality $\luin A\ruin\le\luin B\ruin$ for all unitarily invariant norms should be understood as: If $B\in J_{\luin\cdot\ruin}$ then $A\in J_{\luin\cdot\ruin}$ and the inequality holds.

\item\label{UinProp2} The Ky Fan norms can be computed as
$$\|A\|_{(n)}=\sum_{j=1}^ns_j(A)=\max\sum_{j=1}^n|\skp{A\f_j}{\psi_j}|,$$
where $\max$ is taken over all orthonormal systems $\f_j$ and all orthonormal systems $\psi_j$. The maximum is attained if $|A|\f_j=s_j(A)\f_j$ and $\psi_j=U^*\f_j$ where $A=U|A|$ is the polar decomposition of $A$.

\item\label{UinProp1} If $0\le A\le B$ then $\sum_{j=1}^ns_j(A)\le\sum_{j=1}^ns_j(B)$ for all $n$ and therefore $\luin A\ruin\le\luin B\ruin$ for all unitarily invariant norms.
\end{enumerate}
\end{proposition}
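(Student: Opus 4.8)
\emph{Plan of proof.} All four items are classical facts of the von Neumann--Ky Fan theory of symmetric gauge functions; the plan is to record how they chain together, treating them in the order (3), (2), (4), (1), since the variational description of the Ky Fan norms drives everything else.

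First I would prove (3). Writing the polar decomposition $A=U|A|$ one has $\skp{A\f_j}{\psi_j}=\skp{|A|^{1/2}\f_j}{|A|^{1/2}U^*\psi_j}$, and two applications of the Cauchy--Schwarz inequality (in $H$, then in $\ell^2$) bound $\sum_{j=1}^n|\skp{A\f_j}{\psi_j}|$ by $(\sum_{j=1}^n\skp{|A|\f_j}{\f_j})^{1/2}(\sum_{j=1}^n\skp{|A|U^*\psi_j}{U^*\psi_j})^{1/2}$. Ky Fan's maximum principle for the positive compact operator $|A|$ --- namely that $\sum_{j=1}^ns_j(|A|)$ is the maximum of $\sum_{j=1}^n\skp{|A|e_j}{e_j}$ over all orthonormal systems $e_1,\dots,e_n$, which itself follows from the min--max theorem --- bounds each factor by $\sum_{j=1}^ns_j(A)=\|A\|_{(n)}$, and a direct computation with the extremal systems indicated in the statement shows the maximum is attained; this proves (3). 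For (2), the forward implication is immediate, because every Ky Fan norm is itself a unitarily invariant norm (singular values are unchanged under $X\mapsto UXV$), so the hypothesis specializes to $\|A\|_{(n)}\le\|B\|_{(n)}$ for all $n$. The converse --- that $\|A\|_{(n)}\le\|B\|_{(n)}$ for every $n$, i.e.\ weak majorization of the singular value sequences, forces $\luin A\ruin\le\luin B\ruin$ for every unitarily invariant norm --- is the classical theorem of Ky Fan, which I would invoke: weak majorization places the decreasing rearrangement of $s(A)$ in the closed convex hull of the truncated, coordinate-permuted copies of $s(B)$, and a symmetric gauge function, being convex, monotone and permutation invariant, respects this.

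Given (2) and (3), items (4) and (1) are short. For (4): since $A\ge0$, choose an orthonormal system $\f_j$ with $A\f_j=s_j(A)\f_j$; then $\sum_{j=1}^ns_j(A)=\sum_{j=1}^n\skp{A\f_j}{\f_j}\le\sum_{j=1}^n\skp{B\f_j}{\f_j}\le\sum_{j=1}^ns_j(B)$, using $0\le A\le B$ and (3) with $\psi_j=\f_j$; the concluding inequality for all unitarily invariant norms is exactly the converse half of (2). For the lower bound in (1), let $P$ be the rank-one spectral projection of $|A|$ at its top eigenvalue $\|A\|=s_1(A)$, so $P|A|P=\|A\|P$; the pinching $X\mapsto PXP+(I-P)X(I-P)$ equals $\frac12(X+VXV)$ for the self-adjoint unitary $V=2P-I$, hence is contractive for $\luin\cdot\ruin$, and the singular values of $PXP$ (extended by zero) are dominated term by term by those of that pinching, so (2) gives $\luin PXP\ruin\le\luin X\ruin$; taking $X=|A|$ and using the normalization $\luin P\ruin=\|P\|=1$ yields $\|A\|=\luin\|A\|P\ruin=\luin P|A|P\ruin\le\luin|A|\ruin=\luin A\ruin$. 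For the upper bound, expand $A$ in its singular value decomposition $A=\sum_j s_j(A)\,\psi_j\otimes\f_j$ as a norm-convergent sum of rank-one operators; the triangle inequality and normalization bound the partial sums by $\sum_j s_j(A)=\|A\|_1$, and since the tails of these partial sums are controlled by tails of $\sum_j s_j(A)$, the sequence is Cauchy in $\luin\cdot\ruin$; its operator-norm limit --- which, by the lower bound just established, is also its $\luin\cdot\ruin$-limit --- is $A$, so $A\in J_{\luin\cdot\ruin}$ and $\luin A\ruin\le\|A\|_1$.

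The one genuinely non-elementary ingredient is the converse direction of the Ky Fan dominance property in (2), namely that weak majorization of singular value sequences forces the inequality for every symmetric gauge function; I would cite this classical result rather than reprove it. Everything else is bookkeeping around Ky Fan's maximum principle, which I would in turn take from the min--max theorem for compact self-adjoint operators.
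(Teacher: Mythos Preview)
Your proof is correct. The paper itself does not prove these items at all: it simply cites Gohberg--Kre\u{\i}n, \emph{Introduction to the Theory of Linear Nonselfadjoint Operators}, Chapter~III, \S\S3--4, for parts (\ref{UinProp1/2}), (\ref{UinProp3}) and (\ref{UinProp2}), and then remarks that (\ref{UinProp1}) is an immediate consequence of (\ref{UinProp2}). So your treatment of (\ref{UinProp1}) via the variational formula in (\ref{UinProp2}) matches the paper's one-line derivation exactly, and for the remaining three parts you are supplying precisely the classical arguments that the paper chooses to outsource to the literature; there is no methodological divergence to speak of, only a difference in level of detail.
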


\begin{proof} The proof of (\ref{UinProp1/2}) can be found in \cite[Chapter III, \S3]{GKrein}, of (\ref{UinProp3}) in \cite[Chapter III, \S4]{GKrein}, of (\ref{UinProp2}) in \cite[Chapter III, \S3]{GKrein}. Finally, (\ref{UinProp1}) is an immediate consequence of (\ref{UinProp2}).
\end{proof}

\begin{proposition}\label{SemiCont}
Let $A_n$ be an increasing sequence of positive operators from $J_{\luin\cdot\ruin}$, and let $A_n$ weakly converges to some $A\in J_{\luin\cdot\ruin}$. Then $\luin A_n\ruin\to\luin A\ruin$.
\end{proposition}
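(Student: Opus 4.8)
The plan is to reduce the statement to a fact about Ky Fan norms and then exploit monotone convergence of singular values, using Proposition \ref{UinProp}(\ref{UinProp3}) to pass back to an arbitrary unitarily invariant norm. First I would fix $n$ and work with the Ky Fan norm $\|\cdot\|_{(n)}$. By Proposition \ref{UinProp}(\ref{UinProp2}), $\|A\|_{(n)}=\max\sum_{j=1}^n|\skp{A\f_j}{\psi_j}|$ over orthonormal systems $\f_1,\dots,\f_n$ and $\psi_1,\dots,\psi_n$, so that for the limit operator $A$ one has, for every $\e>0$, a choice of such finite systems with $\|A\|_{(n)}-\e<\sum_{j=1}^n|\skp{A\f_j}{\psi_j}|$. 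Here weak convergence enters: $\skp{A_m\f_j}{\psi_j}\to\skp{A\f_j}{\psi_j}$ for each of the finitely many pairs $(\f_j,\psi_j)$, hence $\sum_{j=1}^n|\skp{A_m\f_j}{\psi_j}|\to\sum_{j=1}^n|\skp{A\f_j}{\psi_j}|$, and for large $m$ the right side exceeds $\|A\|_{(n)}-\e$. Since $\sum_{j=1}^n|\skp{A_m\f_j}{\psi_j}|\le\|A_m\|_{(n)}$, we get $\liminf_m\|A_m\|_{(n)}\ge\|A\|_{(n)}$ for every $n$.

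For the reverse inequality I would use that the sequence is increasing: $A_m\le A_{m+1}\le A$ as positive operators (the inequality $A_m\le A$ follows from weak convergence of an increasing sequence, since $\skp{(A-A_m)\f}{\f}=\lim_{l}\skp{(A_l-A_m)\f}{\f}\ge0$). By Proposition \ref{UinProp}(\ref{UinProp1}), $0\le A_m\le A$ gives $\|A_m\|_{(n)}\le\|A\|_{(n)}$ for all $n$ and $m$, hence $\limsup_m\|A_m\|_{(n)}\le\|A\|_{(n)}$. Combining the two bounds, $\|A_m\|_{(n)}\to\|A\|_{(n)}$ for each fixed $n$.

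It remains to upgrade convergence of every Ky Fan norm to convergence of $\luin\cdot\ruin$. Here monotonicity is again essential: since $A_m\le A_{m+1}$, Proposition \ref{UinProp}(\ref{UinProp1}) gives $\luin A_m\ruin\le\luin A_{m+1}\ruin\le\luin A\ruin$, so $\luin A_m\ruin$ is increasing and bounded above by $\luin A\ruin$, hence converges to some limit $L\le\luin A\ruin$. To see $L\ge\luin A\ruin$, note that $\|A_m\|_{(n)}\le\luin A_m\ruin\cdot\text{(a norm-dependent but $m$-independent constant)}$ is not quite what I want; instead I would argue directly from the symmetric gauge function representation: $\luin A\ruin=\Phi(s_1(A),s_2(A),\dots)$ for the gauge function $\Phi$ associated with $\luin\cdot\ruin$, and the Ky Fan norms determine, via $s_j(A)=\|A\|_{(j)}-\|A\|_{(j-1)}$, that $s_j(A_m)\to s_j(A)$ for each $j$. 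Then lower semicontinuity of the gauge function $\Phi$ under coordinatewise convergence of nonnegative sequences (or equivalently Fatou applied to the defining supremum of $\Phi$ over the dual gauge) yields $\luin A\ruin=\Phi\big((s_j(A))_j\big)\le\liminf_m\Phi\big((s_j(A_m))_j\big)=\liminf_m\luin A_m\ruin=L$. Hence $L=\luin A\ruin$, which is the claim.

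The main obstacle is the last step: convergence of all Ky Fan norms does not instantly give convergence of an arbitrary unitarily invariant norm (the norm ideal $J$ is genuinely infinite-dimensional and symmetric gauge functions are only lower semicontinuous, not continuous, for coordinatewise limits), so one must combine the coordinatewise convergence $s_j(A_m)\to s_j(A)$ with the monotonicity $\luin A_m\ruin\uparrow$ and a Fatou-type argument for $\Phi$; each ingredient alone is insufficient. I would also double-check the membership subtlety flagged in Proposition \ref{UinProp}(\ref{UinProp3}): the hypothesis already places $A$ and all $A_m$ in $J$, so no extra work is needed there.
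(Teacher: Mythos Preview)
Your proposal is correct and follows the same skeleton as the paper's proof: monotonicity of $A_m$ together with $A_m\le A$ gives $\luin A_m\ruin\uparrow L\le\luin A\ruin$, and lower semicontinuity of the norm under weak convergence gives $L\ge\luin A\ruin$. The paper simply cites the lower semicontinuity $\luin A\ruin\le\liminf_m\luin A_m\ruin$ as a known fact from Simon's book, whereas you unpack it: you first establish $\liminf_m\|A_m\|_{(n)}\ge\|A\|_{(n)}$ directly from the variational formula in Proposition~\ref{UinProp}(\ref{UinProp2}), deduce $s_j(A_m)\to s_j(A)$ coordinatewise, and then invoke a Fatou argument for the symmetric gauge function $\Phi$. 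Your route is more self-contained but also more circuitous---the detour through convergence of \emph{every} Ky Fan norm is not really needed, since the variational formula already yields $\luin A\ruin\le\liminf_m\luin A_m\ruin$ directly once one writes $\luin\cdot\ruin$ as a supremum over its dual ball. Either way the argument is sound, and you correctly flag that coordinatewise convergence of singular values alone only gives lower semicontinuity of $\Phi$, so the monotonicity hypothesis is genuinely doing work in the upper bound.
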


\begin{proof} By Proposition \ref{UinProp}-(\ref{UinProp1}), the sequence $\luin A_n\ruin$ is nondesreasing. Hence, the limit can be replaced by supremum. By the same argument, $\sup_n\luin A_n\ruin\le\luin A\ruin$.

The opposite inequality follows from the lower semicontinuity of $\luin\cdot\ruin$, that is $\liminf\luin A_n\ruin\le\luin A\ruin$, \cite[Theorem 2.7]{Simon} (see also \cite[Theorem III.5.2.]{GKrein} and \cite[Theorem 2.16]{Simon}).
\end{proof}

We shall deal with convex functions $\f:[0,+\infty)\to[0,+\infty)$. Note that such function must be nondecreasing. Although such a function is never operator monotone (that is, $A\le B$ does not imply $\f(A)\le\f(B)$), and not necessarily operator convex (that is, $\f(\lambda A+(1-\lambda)B)\le\lambda\f(A)+(1-\lambda)\f(B)$ need not be true in general), many scalar-valued inequalities can be extended to unitarily invariant norms.

\begin{lemma}\label{ConvexFunc} Let $\f:[0,+\infty)\to[0,+\infty)$ be a convex function with $\f(0)=0$.
\begin{enumerate}
\item\label{ConvexFunc1} If $\luin A\ruin\le\luin B\ruin$ in any unitarily invariant norm, then also $\luin \f(A)\ruin\le\luin\f(B)\ruin$ in any unitarily invariant norm. In particular, the conclusion follows for positive $A$ and $B$ such that $A\le B$.

\item\label{ConvexFunc2} If $A$, $B$ be any two positive operators, and $0\le\lambda\le1$ then $$\luin\f(\lambda A+(1-\lambda)B)\ruin\le\luin\lambda\f(A)+(1-\lambda)\f(B)\ruin$$ in any unitarily invariant norm.

\item\label{ConvexFunc3} If $A_n$ is a sequence of positive operators then $$\luin\sum_{n=1}^{+\infty}\f(A_n)\ruin\le\luin\f(\sum_{n=1}^{+\infty}A_n)\ruin$$ in any unitarily     invariant norm.

    The preceding inequality is reversed, if $\f:[0,+\infty)\to[0,+\infty)$ is a concave function, with $\f(0)=0$, $\f(+\infty)=+\infty$.
\end{enumerate}
\end{lemma}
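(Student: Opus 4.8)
I would treat the three parts by progressively more global arguments. Parts (\ref{ConvexFunc1}) and (\ref{ConvexFunc2}) are scalar majorization/convexity facts transported to operators via the Ky Fan dominance principle (Proposition \ref{UinProp}-(\ref{UinProp3})) and the pinching inequality, while (\ref{ConvexFunc3}) is a genuine subadditivity inequality for unitarily invariant norms that does \emph{not} reduce to the two-operator case by a naive induction. Throughout, $\f$ being convex with $\f(0)=0$ is nondecreasing and continuous, so $s_j(\f(X))=\f(s_j(X))$ for $X\ge0$; for a general operator $A$ we read $\f(A)$ as $\f(|A|)$, which changes neither side of any of the claimed inequalities because $\luin\cdot\ruin$ and the $s_j$ depend only on $|A|$ and $\luin A\ruin=\luin|A|\ruin$. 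So it suffices to treat positive operators.

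For (\ref{ConvexFunc1}): by Proposition \ref{UinProp}-(\ref{UinProp3}) the hypothesis ``$\luin A\ruin\le\luin B\ruin$ for every unitarily invariant norm'' says exactly that $\sum_{j=1}^{n}s_j(A)\le\sum_{j=1}^{n}s_j(B)$ for all $n$ (weak majorization of the two singular-value sequences), and the conclusion is the same assertion with $s_j(A),s_j(B)$ replaced by $\f(s_j(A)),\f(s_j(B))$. Hence the whole point is the elementary fact that weak majorization of nonincreasing nonnegative sequences is preserved by every convex nondecreasing $\f$ with $\f(0)=0$. I would prove this from the integral representation $\f(t)=c\,t+\int_{(0,\infty)}(t-s)_+\,d\nu(s)$ with $c=\f'(0^+)\ge0$ and $\nu\ge0$ (a Fubini rewriting of $\f(t)=\int_0^t\f'(r)\,dr$, using that $\f'$ is nondecreasing), which by linearity and positivity reduces everything to the trivial inequality $\sum_{j=1}^{n}(a_j-s)_+\le\sum_{j=1}^{n}(b_j-s)_+$: for nonincreasing $a$ the left side equals $\sum_{j=1}^{m}a_j-ms$ with $m\le n$ chosen so that $a_1,\dots,a_m>s$ and $a_{m+1},a_{m+2},\dots\le s$, and weak majorization finishes it. The special case ``$0\le A\le B$'' is then immediate, since such $A,B$ already satisfy the weak majorization by Proposition \ref{UinProp}-(\ref{UinProp1}).

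For (\ref{ConvexFunc2}): set $C=\lambda A+(1-\lambda)B\ge0$ (the cases $\lambda\in\{0,1\}$ being trivial), let $\{u_j\}$ be an orthonormal eigenbasis of $C$, and let $\mathcal E$ be the pinching onto the diagonal of the $u_j$-basis, so that $\mathcal E(C)=C$; writing $a_j=\langle Au_j,u_j\rangle\ge0$ and $b_j=\langle Bu_j,u_j\rangle\ge0$ we have $\lambda a_j+(1-\lambda)b_j=s_j(C)$. The argument is a chain of three estimates. First, scalar convexity $\f(\lambda a_j+(1-\lambda)b_j)\le\lambda\f(a_j)+(1-\lambda)\f(b_j)$, read in the $u_j$-basis (in which $\f(C)$ is diagonal with entries $\f(s_j(C))$, while $\f(\mathcal E(A))$, $\f(\mathcal E(B))$ are diagonal with entries $\f(a_j)$, $\f(b_j)$), says $\f(C)\le\lambda\f(\mathcal E(A))+(1-\lambda)\f(\mathcal E(B))$ as positive operators, so $\luin\f(C)\ruin\le\luin\lambda\f(\mathcal E(A))+(1-\lambda)\f(\mathcal E(B))\ruin$ by Proposition \ref{UinProp}-(\ref{UinProp1}). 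Second, scalar Jensen $\langle\f(A)u_j,u_j\rangle\ge\f(\langle Au_j,u_j\rangle)=\f(a_j)$ gives $\f(\mathcal E(A))\le\mathcal E(\f(A))$, and likewise for $B$, whence $\lambda\f(\mathcal E(A))+(1-\lambda)\f(\mathcal E(B))\le\mathcal E\bigl(\lambda\f(A)+(1-\lambda)\f(B)\bigr)$, and Proposition \ref{UinProp}-(\ref{UinProp1}) again passes this to the norm. Third, a pinching never increases a unitarily invariant norm: applying Proposition \ref{UinProp}-(\ref{UinProp2}) with $\f_j=\psi_j=u_j$ gives $\|\mathcal E(Y)\|_{(n)}\le\|Y\|_{(n)}$ for every $n$, hence $\luin\mathcal E(Y)\ruin\le\luin Y\ruin$, here with $Y=\lambda\f(A)+(1-\lambda)\f(B)\ge0$. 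Composing the three estimates gives the claim. (The operator-norm case, where $C$ need not be compact, is elementary: evaluate at a near-top spectral vector of $C$ and use Jensen and convexity.)

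For (\ref{ConvexFunc3}): the scalar germ is the superadditivity $\f(a)+\f(b)\le\f(a+b)$ for convex $\f$ with $\f(0)=0$, whose operator analogue is false, so this statement genuinely lives at the level of unitarily invariant norms. I would base it on the Bourin--Uchiyama subadditivity inequality $\luin\f(A_1)+\dots+\f(A_N)\ruin\le\luin\f(A_1+\dots+A_N)\ruin$ for convex $\f$ with $\f(0)=0$ and positive $A_i$ (the unitarily invariant norm version of Rotfel'd's trace inequality). Here the naive induction is \emph{not} available, since ``$\luin R\ruin\le\luin R'\ruin$ together with $Z\ge0$'' does not force $\luin Z+R\ruin\le\luin Z+R'\ruin$; instead one must run the Ky Fan/block-matrix argument that realizes $A_1+\dots+A_N$, up to a partial isometry, as a block of the operator $(A_i^{1/2}A_j^{1/2})_{i,j}$ whose block-diagonal compression is $\bigoplus_i A_i$ (on which $\f$ acts blockwise) --- this block estimate is where the real work sits, and it is the main obstacle. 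The passage from a finite sum to a series is then routine: $\sum_{n\le N}\f(A_n)$ increases to $\sum_n\f(A_n)$ and, by continuity and monotonicity of $\f$, $\f\bigl(\sum_{n\le N}A_n\bigr)$ increases to $\f\bigl(\sum_n A_n\bigr)$, so Proposition \ref{SemiCont} upgrades the finite inequalities to the infinite one. Finally, the reversed, concave statement (for concave $\f$ with $\f(0)=0$, $\f(+\infty)=+\infty$) is the concave Rotfel'd/Bourin--Uchiyama inequality and would be proved by the same circle of ideas; note that, unlike the convex case, it does \emph{not} follow from part (\ref{ConvexFunc1}), whose conclusion genuinely requires convexity, so it must be established in its own right.
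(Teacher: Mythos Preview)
Your plan is correct and, for parts (\ref{ConvexFunc1}), (\ref{ConvexFunc2}) and the convex half of (\ref{ConvexFunc3}), it is essentially the paper's argument with the citations unpacked. The paper reduces (\ref{ConvexFunc1}) to Ky Fan norms and invokes Karamata's inequality, where you instead prove the weak-majorization step via the integral representation $\f(t)=ct+\int(t-s)_+\,d\nu(s)$; these are two standard proofs of the same Hardy--Littlewood--P\'olya fact. For (\ref{ConvexFunc2}) the paper simply cites Aujla, whose proof is precisely your pinching/Jensen chain. For the convex half of (\ref{ConvexFunc3}) both you and the paper take the finite case from the literature (Kosem, Uchiyama, Bourin--Uchiyama) and pass to series via Proposition~\ref{SemiCont}; your remark that naive induction on $N$ fails (because $X\prec_w Y$ and $Z\ge0$ do not give $X+Z\prec_w Y+Z$) is correct and is why those papers prove the $N$-term case directly.

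The one substantive divergence is the concave half of (\ref{ConvexFunc3}). The paper tries to deduce it from the convex half: set $B_n=\f(A_n)$, apply the convex inequality to $\f^{-1}$, and then ``apply part (\ref{ConvexFunc1}) with $\f$''. Your instinct that this does not work is right. First, the displayed inequality in the paper has the wrong sign: the convex result applied to $\f^{-1}$ gives $\luin\sum\f^{-1}(B_n)\ruin\le\luin\f^{-1}(\sum B_n)\ruin$, i.e.\ $\luin\sum A_n\ruin\le\luin\f^{-1}(\sum\f(A_n))\ruin$, not the reverse. Second, even with the correct direction one cannot finish by part (\ref{ConvexFunc1}), since (\ref{ConvexFunc1}) genuinely needs convexity: with $s(A)=(1,1)$, $s(B)=(2,0)$ and $\f(t)=\sqrt t$ one has $\luin A\ruin\le\luin B\ruin$ for every unitarily invariant norm but $\|\f(A)\|_1=2>\sqrt2=\|\f(B)\|_1$. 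So your statement that the concave Rotfel'd/Bourin--Uchiyama inequality must be established in its own right is not excessive caution---it is necessary, and the sources the paper cites (Kosem, Uchiyama) in fact prove the concave case directly.
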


\begin{proof} (\ref{ConvexFunc1}) Let $s_j(A)$ and $s_j(B)$ be the singular values of $A$ and $B$ respectively. By the assumption we have for all Ky Fan norms $\|A\|_{(n)}\le\|B\|_{(n)}$, i.e.
$$\sum_{j=1}^ns_j(A)\le\sum_{j=1}^ns_j(B).$$
Since $\f$ is convex and nondecreasing, by elementary Karamata inequality \cite{Karamata}, we have
$$\sum_{j=1}^n\f(s_j(A))\le\sum_{j=1}^n\f(s_j(B)),\qquad\mbox{i.e.}\qquad\|\f(A)\|_{(n)}\le\|\f(B)\|_{(n)}.$$
Now result follows from Ky Fan dominance property.

(\ref{ConvexFunc2}) This is \cite[Theorem 2.3]{Aujla}.

(\ref{ConvexFunc3}) For finite sums and convex $\f$, this was proved in \cite[Corollary 3.6]{Kosem} (see also \cite[Theorem 4.4]{Uchiyama} and \cite[\S 6.1]{AujlaLMA2012}). Let us prove for infinite sums. For any $n\in\N$ we have $A_1+\dots+A_n\le\sum_{n=1}^{+\infty}A_n$, which, by Proposition \ref{ConvexFunc}-(\ref{ConvexFunc1}) implies
$$\luin\f(A_1+\dots+A_n)\ruin\le\luin\f\left(\sum_{n=1}^{+\infty}A_n\right)\ruin.$$
Hence, by finite case
$$\luin\sum_{k=1}^n\f(A_k)\ruin\le\luin\f\left(\sum_{k=1}^nA_k\right)\ruin\le\luin\f\left(\sum_{n=1}^{+\infty}A_n\right)\ruin.$$
The result for convex $\f$, follows taking a limit $n\to+\infty$ according to Proposition \ref{SemiCont}.

Let $\f:[0,+\infty)\to[0,+\infty)$ be a concave function, with $\f(0)=0$, $\f(+\infty)=+\infty$. Then it has the inverse function $\f^{-1}:[0,+\infty)\to[0,+\infty)$ which is convex, with $\f^{-1}(0)=0$. Let $B_n=\f(A_n)$. By result for convex functions, we have
$$\luin\f^{-1}\left(\sum_{n=1}^{+\infty}B_n\right)\ruin\le\luin\sum_{k=1}^n\f^{-1}(B_n)\ruin,\quad\mbox{i.e.}\quad
    \luin\f^{-1}\left(\sum_{n=1}^{+\infty}\f(A_n)\right)\ruin\le\luin\sum_{k=1}^nA_n\ruin.$$
Apply Proposition \ref{ConvexFunc}-(\ref{ConvexFunc1}) to the previous inequality and $\f$ to obtain the conclusion.
\end{proof}

\subsection*{Bochner spaces} Let $(\Omega,\mu)$ be a measurable space and let $X$ be a Banach space. The Bochner space $L^p(\Omega;X)$ is defined as the set of strongly measurable functions $f:\Omega\to X$ such that
$$\|f\|_{L^p(\Omega;X)}:=\left(\int_\Omega\|f(t)\|_X^p\d\mu(t)\right)^{1/p}<+\infty,$$
after identification of $\mu$-almost everywhere equal functions. Here, strong measurability is equivalent to weak measurability (that is, the measurability of scalar functions $t\mapsto\Lambda(f(t))$ for all $\Lambda\in X^*$) and separability of the image of $f$.

Bochner integral is linear, additive with respect to disjoint union, and also there holds
\begin{equation}\label{BochnerLinear}
T\int_\Omega f(t)\d\mu(t)=\int_\Omega Tf(t)\d\mu(t)
\end{equation}
for all $f\in L^p(\Omega;X)$ and all bounded linear $T:X\to Y$.

Jensen inequality for unitarily invariant norms, Proposition \ref{ConvexFunc}-(\ref{ConvexFunc2}), can be extended to Bochner integral, using the same argument as in \cite[Theorem 2.3]{Aujla}.

\begin{proposition}\label{JensenIntegral} Let $\luin\cdot\ruin$ be some unitarily invariant norm, let $J$ be the corresponding ideal, and let $L^1(\Omega;J)$ be the Bochner space, where $(\Omega;\mu)$ is a measurable space such that $\mu(\Omega)=1$. For all $A\in L^1(\Omega;J)$ such that $A(t)\ge0$ for almost all $t\in\Omega$ (i.e.\ $A(t)$ is a positive operator), and all convex functions $\f:[0,+\infty)\to[0,+\infty)$ the following inequality holds:
\begin{equation}\label{JensenOperator}
\luin\f\left(\int_\Omega A(t)\d\mu(t)\right)\ruin\le\luin\int_\Omega\f(A(t))\d\mu(t)\ruin.
\end{equation}

If $\f:[0,+\infty)\to[0,+\infty)$ is concave with $\f(0)=0$, $\f(+\infty)=+\infty$ then the inequality (\ref{JensenOperator}) is reversed.
\end{proposition}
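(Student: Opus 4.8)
The plan is to reduce both assertions to inequalities between Ky Fan norms and to derive those by applying the scalar Jensen inequality ``one vector at a time'', following the device of \cite[Theorem~2.3]{Aujla}. Write $X=\int_\Omega A(t)\,\d\mu(t)$; since $A(t)\ge0$ almost everywhere and $\mu(\Omega)=1$, $X$ is a positive operator of $J$, and likewise $Y=\int_\Omega\f(A(t))\,\d\mu(t)$ is a positive operator of $J$ as soon as $\f\circ A\in L^1(\Omega;J)$ --- if it is not, the right-hand side of \eqref{JensenOperator} (resp.\ of its reverse) is $+\infty$ and nothing is to be proved. Two elementary facts are used throughout. First, for a unit vector $\xi$ the functional $B\mapsto\skp{B\xi}{\xi}$, and for a finite rank projection $P$ the functional $B\mapsto\tr(BP)$, are bounded and linear on $J$ (since $|\skp{B\xi}{\xi}|\le\|B\|\le\luin B\ruin$ and $|\tr(BP)|\le\mathrm{rank}(P)\luin B\ruin$), so by \eqref{BochnerLinear} they commute with the Bochner integral. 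Second, for positive $B$ and convex $\f$ one has the one-vector Jensen inequality $\f(\skp{B\xi}{\xi})\le\skp{\f(B)\xi}{\xi}$, and the reverse for concave $\f$: this is the scalar Jensen inequality applied to the probability measure $S\mapsto\skp{E_B(S)\xi}{\xi}$ coming from the spectral measure $E_B$ of $B$.

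For $\f$ convex I would fix $n$, take orthonormal eigenvectors $\xi_1,\dots,\xi_n$ of $X$ for its $n$ largest eigenvalues, and note that $\|\f(X)\|_{(n)}=\sum_{j=1}^n\f(\lambda_j(X))$ because $\f$ is nondecreasing. Then for each $j$, by commutation with the Bochner integral, by the scalar Jensen inequality for $\mu$, and by the one-vector Jensen inequality in turn, $\f(\lambda_j(X))=\f\big(\int_\Omega\skp{A(t)\xi_j}{\xi_j}\,\d\mu(t)\big)\le\int_\Omega\f(\skp{A(t)\xi_j}{\xi_j})\,\d\mu(t)\le\int_\Omega\skp{\f(A(t))\xi_j}{\xi_j}\,\d\mu(t)=\skp{Y\xi_j}{\xi_j}$. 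Summing over $j$ and applying the variational description of Ky Fan norms (Proposition~\ref{UinProp}-(\ref{UinProp2})) gives $\|\f(X)\|_{(n)}\le\sum_{j=1}^n\skp{Y\xi_j}{\xi_j}\le\|Y\|_{(n)}$ for every $n$, which is \eqref{JensenOperator} by Ky Fan dominance (Proposition~\ref{UinProp}-(\ref{UinProp3})).

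For $\f$ concave one must instead bound $\|Y\|_{(n)}$ from above, so I would work at the eigenvectors of $Y$: let $P$ be the projection onto the span of the eigenvectors of $Y$ for its $n$ largest eigenvalues, so that $\|Y\|_{(n)}=\tr(YP)=\int_\Omega\tr(\f(A(t))P)\,\d\mu(t)$. The crucial pointwise inequality is the lemma: if $\f$ is concave with $\f(0)=0$ and $B$ is positive, then $\tr(\f(B)P)\le\tr(\f(PBP))$. To prove it, choose the orthonormal basis $\xi_1,\dots,\xi_n$ of $\operatorname{range}P$ to consist of eigenvectors of the compression $PBP|_{\operatorname{range}P}$; then $\skp{B\xi_i}{\xi_i}$ equals its $i$-th eigenvalue, and the reverse one-vector Jensen inequality gives $\tr(\f(B)P)=\sum_i\skp{\f(B)\xi_i}{\xi_i}\le\sum_i\f(\skp{B\xi_i}{\xi_i})=\tr(\f(PBP))$. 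Applying this with $B=A(t)$, integrating, and using that $N\mapsto\tr\f(N)$ is concave on positive matrices --- it equals $\min_{\{e_i\}}\sum_i\f(\skp{Ne_i}{e_i})$, a minimum of concave functions of $N$ --- so that the finite-dimensional Jensen inequality applies to the matrix-valued map $t\mapsto PA(t)P$, whose integral is the matrix of $PXP$, I obtain $\|Y\|_{(n)}\le\int_\Omega\tr(\f(PA(t)P))\,\d\mu(t)\le\tr(\f(PXP))$. Finally $PXP$, viewed on $\operatorname{range}P$, is a compression of $X$, so $\lambda_j(PXP)\le\lambda_j(X)$ by interlacing, and $\f$ being nondecreasing, $\tr(\f(PXP))=\sum_{j=1}^n\f(\lambda_j(PXP))\le\sum_{j=1}^n\f(\lambda_j(X))=\|\f(X)\|_{(n)}$; Ky Fan dominance yields the reversed \eqref{JensenOperator}.

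The hard part is not the scalar Jensen inequality but the passage to the Bochner integral --- namely the interchange of the integral with the functionals $\skp{\cdot\,\xi}{\xi}$ and $\tr(\cdot\,P)$, which is why one checks they are bounded and linear on $J$ --- and, in the concave case, the auxiliary lemma $\tr(\f(B)P)\le\tr(\f(PBP))$ together with the concavity of $N\mapsto\tr\f(N)$. The remaining points are routine: a convex (resp.\ concave) $\f\colon[0,+\infty)\to[0,+\infty)$ with $\f(0)=0$ is automatically nondecreasing, and all integrability and endpoint subtleties are harmless, because whenever a right-hand side is infinite the inequality is vacuous and otherwise the membership of $\f\circ A$ in $L^1(\Omega;J)$ keeps every quantity in sight finite.
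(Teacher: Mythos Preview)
Your treatment of the convex case is essentially the paper's: both diagonalise $X=\int_\Omega A(t)\,\d\mu(t)$, apply scalar Jensen and then the one--vector inequality $\f(\skp{A\xi}{\xi})\le\skp{\f(A)\xi}{\xi}$ to each eigenvector of $X$, and conclude by Ky Fan dominance.

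For the concave case the two arguments diverge. The paper reduces to the convex case via the inverse: since $\f^{-1}$ is convex with $\f^{-1}(0)=0$, applying the already proved inequality to $\f^{-1}$ and to $t\mapsto\f(A(t))$ yields $\luin\f^{-1}(Y)\ruin\le\luin X\ruin$ for every unitarily invariant norm, and the paper then invokes Lemma~\ref{ConvexFunc}-(\ref{ConvexFunc1}) to pass to $\luin Y\ruin\le\luin\f(X)\ruin$. This is short, though the last step as written is delicate, since Lemma~\ref{ConvexFunc}-(\ref{ConvexFunc1}) is stated and proved for \emph{convex} $\f$, and weak majorization is not in general preserved by nondecreasing concave functions. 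Your route is instead direct: you work at the top spectral projection $P$ of $Y$, establish the compression inequality $\tr(\f(B)P)\le\tr(\f(PBP))$ via the reverse one--vector Jensen, exploit the concavity of $N\mapsto\tr\f(N)$ on positive matrices to apply finite--dimensional Jensen to $t\mapsto PA(t)P$, and finish with Cauchy interlacing $\lambda_j(PXP)\le\lambda_j(X)$. This is longer and assembles several auxiliary facts, but each is elementary and the argument is fully self--contained; in particular it sidesteps any appeal to monotonicity of $\luin\f(\cdot)\ruin$ under weak majorization for concave $\f$. Both approaches ultimately rest on the same scalar Jensen and spectral--calculus ingredients; the paper trades a second pass through those ingredients for the $\f^{-1}$ trick, while you pay for directness with the extra lemma and the interlacing step.
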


\begin{proof} Let $s_n$ be the eigenvalues of $\int_\Omega A(t)\d\mu(t)$ arranged in nonincreasing order counting possible multiplicities, and let $\xi_n$ be the corresponding unit eigenvectors. Then $\f(s_k)$ are eigenvalues of $\f\left(\int_\Omega A(t)\d\mu(t)\right)$ with respect to the same eigenvectors and
\begin{align*}\sum_{k=1}^n\f(s_k)=&\sum_{k=1}^n\f\left(\skp{\left(\int_\Omega A(t)\d\mu(t)\right)\xi_k}{\xi_k}\right)=\\
    =&\f\left(\int_\Omega\skp{A(t)\xi_k}{\xi_k}\d\mu(t)\right)\le\int_\Omega\f\left(\skp{A(t)\xi_k}{\xi_k}\right)\d\mu(t)
\end{align*}
by scalar Jensen inequality. Convexity of $\f$ also implies $\f(\skp{A\xi}{\xi})\le\skp{\f(A)\xi}\xi$ for any positive $A$ and any unit vector $\xi$. Therefore
$$\sum_{k=1}^n\f(s_k)\le\sum_{k=1}^n\int_\Omega\skp{\f(A(t))\xi_k}{\xi_k}\d\mu(t)=\sum_{k=1}^n\skp{\int_\Omega\f(A(t))\d\mu(t)\xi_k}{\xi_k}.$$
By Proposition \ref{UinProp}-(\ref{UinProp2}), we obtain
$$\sum_{k=1}^n\f(s_k)\le\sum_{k=1}^ns_k\left(\int_\Omega\f(A(t))\d\mu(t)\right),$$
i.e.
$$\left\|\f\left(\int_\Omega A(t)\d\mu(t)\right)\right\|_{(n)}\le\left\|\int_\Omega\f(A(t))\d\mu(t)\right\|_{(n)}$$
for all $n$. The result follows from Ky Fan dominance property (Proposition \ref{UinProp}-(\ref{UinProp3})).

If $\f$ is concave, then $\f^{-1}$ is convex, and applying the previously proved inequality to $\f^{-1}$ and $\f(A(t))$ we obtain
$$\luin\f^{-1}\left(\int_\Omega\f(A(t))\d\mu(t)\right)\ruin\le\luin\int_\Omega\f^{-1}(\f(A(t)))\d\mu(t)\ruin=
\luin\int_\Omega A(t)\d\mu(t)\ruin.$$
Apply Proposition \ref{ConvexFunc}-(\ref{ConvexFunc1}) to obtain the conclusion.
\end{proof}

\section{Main results}

First, we establish Parseval identity for functions from Bochner space, which is the key technical tool in this paper.

\begin{theorem}\label{ParsevalThm} Let $G$ be a compact abelian group, let $\hat G$ be its Pontryagin dual and let $\luin\cdot\ruin$ be a unitarily invariant norm on ideal $J$. For $A_\theta\in L^2(G;J)$, and $k\in\hat G$, the operators
$$B_k=\int_G\ovl{k(\theta)}A_\theta\d\mu(\theta)$$
are well defined and also
\begin{equation}\label{AbstractParseval}
\sum_{k\in\hat G}|B_k|^2=\int_G|A_\theta|^2\d\mu(\theta),
\end{equation}
where the series on the left hand side converges strongly.
\end{theorem}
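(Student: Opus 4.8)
The plan is to reduce the operator-valued Parseval identity to the scalar one (Proposition \ref{PropAHA}-(\ref{PropAHA2})) by testing against vectors, and then upgrade the resulting pointwise convergence to strong operator convergence via a monotonicity/semicontinuity argument. First I would check that each $B_k$ is well defined: since $A_\theta\in L^2(G;J)$ and $J\subseteq B(H)$, the map $\theta\mapsto\ovl{k(\theta)}A_\theta$ is again in $L^2(G;J)\subseteq L^1(G;J)$ (as $\mu(G)=1$, the space $L^2$ embeds in $L^1$, and $|\ovl{k(\theta)}|=1$), so the Bochner integral $B_k$ exists as an element of $J$. Next, for fixed unit vectors $\xi,\eta\in H$, apply the bounded linear functional $T\mapsto\skp{T\xi}{\eta}$ and use (\ref{BochnerLinear}) to get $\skp{B_k\xi}{\eta}=\int_G\ovl{k(\theta)}\skp{A_\theta\xi}{\eta}\,\d\mu(\theta)=\widehat{f_{\xi,\eta}}(k)$, where $f_{\xi,\eta}(\theta)=\skp{A_\theta\xi}{\eta}\in L^2(G)$ (measurability from strong measurability of $A_\theta$, square-integrability from $\|A_\theta\xi\|\le\|A_\theta\|\le\luin A_\theta\ruin$ together with $A_\theta\in L^2(G;J)$). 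Thus $\skp{B_k\xi}{\eta}$ is precisely the $k$-th Fourier coefficient of a scalar $L^2$ function.

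The heart of the matter is then the identity
$$\sum_{k\in\hat G}\skp{|B_k|^2\xi}{\xi}=\int_G\skp{|A_\theta|^2\xi}{\xi}\,\d\mu(\theta),\qquad \xi\in H,\ \|\xi\|=1,$$
which I would obtain by choosing $\eta$ cleverly. Writing $\skp{|B_k|^2\xi}{\xi}=\|B_k\xi\|^2$, I want to realize $\|B_k\xi\|^2$ as $\sum_{i}|\skp{B_k\xi}{e_i}|^2$ for an orthonormal basis $(e_i)$ of $H$ (or of $\overline{\mathrm{span}}$ of the relevant range). For each fixed $i$, $\skp{B_k\xi}{e_i}=\widehat{f_{\xi,e_i}}(k)$, so scalar Parseval gives $\sum_{k}|\skp{B_k\xi}{e_i}|^2=\int_G|\skp{A_\theta\xi}{e_i}|^2\d\mu(\theta)$. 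Summing over $i$ and interchanging the (nonnegative) sums — justified by Tonelli — yields $\sum_{k}\|B_k\xi\|^2=\int_G\|A_\theta\xi\|^2\d\mu(\theta)=\int_G\skp{|A_\theta|^2\xi}{\xi}\d\mu(\theta)$. The right side equals $\skp{(\int_G|A_\theta|^2\d\mu(\theta))\xi}{\xi}$ again by (\ref{BochnerLinear}), using that $\theta\mapsto|A_\theta|^2$ lies in $L^1(G;J')$ for the appropriate ideal (note $\luin |A_\theta|^2\ruin$ is controlled by $\luin A_\theta\ruin^2$-type bounds in the relevant trace ideal, which is what $A_\theta\in L^2(G;J)$ is designed to give). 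Polarization in $\xi$ then promotes this to equality of the sesquilinear forms, hence of the operators, establishing (\ref{AbstractParseval}) as an identity of (possibly unbounded, a priori) positive quadratic forms.

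Finally I would address the mode of convergence. The partial sums $S_\Delta:=\sum_{k\in\Delta}|B_k|^2$, over finite $\Delta\subseteq\hat G$, form an increasing net of positive operators, and by the form identity just proved, $\skp{S_\Delta\xi}{\xi}\uparrow\skp{(\int_G|A_\theta|^2\d\mu)\xi}{\xi}$ for every $\xi$. An increasing net of positive operators with uniformly bounded quadratic forms converges strongly to the positive operator defined by the supremum of those forms (this is the standard monotone-convergence fact for positive operators: $\|(S-S_\Delta)^{1/2}\xi\|^2=\skp{(S-S_\Delta)\xi}{\xi}\to0$, and $\|(S-S_\Delta)\xi\|\le\|S-S_\Delta\|^{1/2}\|(S-S_\Delta)^{1/2}\xi\|$, with $\|S-S_\Delta\|$ bounded). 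This gives strong convergence of $S_\Delta$ to $\int_G|A_\theta|^2\d\mu(\theta)\in J$, completing the proof. The main obstacle I anticipate is the bookkeeping in the middle step — making sure $\theta\mapsto|A_\theta|^2$ is genuinely Bochner integrable in the right ideal and that (\ref{BochnerLinear}) applies to the functionals $T\mapsto\skp{T\xi}{\xi}$ there — and, to a lesser extent, the Tonelli interchange, which is clean since all terms are nonnegative but should be stated; the passage from the scalar Parseval identity to the vector statement and then to strong convergence is otherwise routine.
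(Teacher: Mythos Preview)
Your argument is correct, and it is genuinely different from---and more elementary than---the paper's own proof. The paper also begins by observing that $B_k$ is well defined and that the Bessel-type inequality $\sum_{k\in\Delta}|B_k|^2\le\int_G|A_\theta|^2\,\d\mu$ holds, which already gives strong convergence of the monotone partial sums. But to identify the limit, the paper computes the difference $\int_G|A_\theta|^2\d\mu-\sum_{k\in\Delta}|B_k|^2=\int_GA_\theta^*(A_\theta-(S_\Delta A)_\theta)\,\d\mu$, tests against $\xi,\eta$, extracts a subsequence on which $\skp{(S_{\Delta_n}A)_\theta\xi}{\eta}\to\skp{A_\theta\xi}{\eta}$ almost everywhere, and then proves uniform integrability of the integrands in order to invoke Vitali's convergence theorem. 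That uniform-integrability estimate is the technical heart of the paper's proof and occupies most of its length. Your route bypasses all of it: expanding $\|B_k\xi\|^2=\sum_i|\skp{B_k\xi}{e_i}|^2$ in an orthonormal basis and applying scalar Parseval coordinatewise, together with Tonelli for the nonnegative double sum and the interchange $\sum_i\int=\int\sum_i$, yields the quadratic-form identity $\sum_k\|B_k\xi\|^2=\int_G\|A_\theta\xi\|^2\d\mu$ directly. Since both sides of (\ref{AbstractParseval}) are positive operators on a complex Hilbert space, equality of quadratic forms already gives equality of operators (polarization is not even needed). The only point to tidy up is the one you flag yourself: $\theta\mapsto|A_\theta|^2$ is strongly measurable (a.e.\ limit of $t_n(\theta)^*t_n(\theta)$ for simple $t_n\to A_\theta$) with $\||A_\theta|^2\|=\|A_\theta\|^2\le\luin A_\theta\ruin^2\in L^1(G)$, so the Bochner integral exists in $B(H)$ and (\ref{BochnerLinear}) applies. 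What the paper's longer argument buys is perhaps a proof that stays closer to the Fourier-analytic picture (partial sums converging to the function) and avoids choosing an orthonormal basis of $H$; what yours buys is a substantially shorter and more transparent proof.
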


\begin{proof} Since $\mu(G)<+\infty$, we have $L^2(G;J)\subseteq L^1(G;J)$. Therefore, by $|k(\theta)|=1$ we have
$$\int_G\luin\ovl{k(\theta)}A_\theta\ruin\d\mu(\theta)=\int_G\luin A_\theta\ruin\d\mu(\theta)<+\infty$$
and $B_k$ are well defined.

Next, for $\xi\in H$, and any mapping $\theta\mapsto X_\theta$ we have
$$\skp{\int_GX_\theta^*X_\theta\d\mu(\theta)\xi}\xi=\int_G\skp{X_\theta\xi}{X_\theta\xi}\d\mu(\theta)\ge0,$$
i.e.\ $\int_GX_\theta^*X_\theta\d\mu(\theta)\ge0$. (Here, we apply (\ref{BochnerLinear}) to $T:J\to\C$, $T(X)=\skp{X\xi}\xi$.) Hence
$$\int_G\left(A_\theta-\sum_{k\in\Delta}B_kk(\theta)\right)^*\left(A_\theta-\sum_{k\in\Delta}B_kk(\theta)\right)\d\mu(\theta)\ge0.$$
Expanding the left hand side, and taking into account Proposition \ref{PropAHA}-(\ref{PropAHA1/2}) we obtain
\begin{equation}\label{Bessel}
\sum_{k\in\Delta}B_k^*B_k\le\int_GA_\theta^*A_\theta\d\mu(\theta).
\end{equation}
(Alternatively, we can invoke the Bessel inequality for Hilbert $C^*$-modules to get (\ref{Bessel}).) In particular,
\begin{equation}\label{RedKonv}\sup_{\substack{\Delta\subseteq\hat G\\\Delta\mbox{ \scriptsize finite}}}\sum_{k\in\Delta}\skp{B_k^*B_k\xi}\xi<+\infty.
\end{equation}

We establish the first conclusion, that series in (\ref{AbstractParseval}) converges weakly, and even more strongly, due to its monotonicity.

Next, let us compute the difference between $\int|A_\theta|^2$ and the partial sum of $\sum_{k\in\hat G}|B_k|^2$.

For finite $\Delta\subseteq\hat G$, let $(S_\Delta A)_\theta=\sum_{k\in\Delta}B_kk(\theta)$ be the partial sum of the abstract Fourier series of $A_\theta$. Then, we have
\begin{equation}\label{Bessel2}
\sum_{k\in\Delta}B_k^*B_k=\sum_{k\in\Delta}\int_Gk(\theta)A_\theta^*B_k\d\mu(\theta)=\int_GA_\theta^*(S_\Delta A)_\theta\d\mu(\theta),
\end{equation}
once again invoking (\ref{BochnerLinear}), and hence
$$\int_G|A_\theta|^2\d\mu(\theta)-\sum_{k\in\Delta}|B_k|^2=\int_GA_\theta^*(A_\theta-(S_\Delta A)_\theta)\d\mu(\theta).$$
Choose unit vectors $\xi$, $\eta\in H$, to get
\begin{equation}\label{Equality1}
\skp{\left(\int_G|A_\theta|^2\d\mu(\theta)-\sum_{k\in\Delta}|B_k|^2\right)\xi}\eta=\int_G\skp{(A_\theta-(S_\Delta A)_\theta)\xi}{A_\theta\eta}\d\mu(\theta).
\end{equation}
We shall prove that the right hand side of (\ref{Equality1}) tends to $0$ for suitable sequence of finite $\Delta_n\subseteq\hat G$.

We have
\begin{align*}\skp{(S_\Delta A)_\theta\xi}\eta=&\sum_{k\in\Delta}\skp{B_k\xi}\eta k(\theta)=\\
=&\sum_{k\in\Delta}\int_G\ovl{k(\f)}\skp{A_\f\xi}\eta\d\mu(\f)k(\theta)=\\
=&(S_\Delta\skp{A\xi}\eta)_\theta.
\end{align*}
However, $|\skp{A_\theta\xi}\eta|\le\|A_\theta\|\,\|\xi\|\,\|\eta\|\le\luin A_\theta\ruin\,\|\xi\|\,\|\eta\|\in L^2(G)$. Therefore, by Proposition \ref{PropAHA}-(\ref{PropAHA1}), $\skp{(S_\Delta A)_\theta\xi}\eta$ converges in $L^2$ norm to $\skp{A_\theta\xi}\eta$. Moreover, we can choose a sequence of finite sets $\Delta_n\subseteq\Delta_{n+1}$ such that
\begin{equation}\label{SDeltan}
\skp{(S_{\Delta_n} A)_\theta\xi}\eta\to\skp{A_\theta\xi}\eta
\end{equation}
for almost all $\theta\in G$.

To show that we can pass the limit on the right hand side of (\ref{Equality1}) to the integrand, let us show that the family of functions $\theta\mapsto\skp{(A_\theta-(S_\Delta A)_\theta)\xi}{A_\theta\eta}$ is uniformly integrable.

Let $\e>0$ be arbitrary. By (\ref{RedKonv}), there is a finite set $\Delta_0\subseteq\hat G$ such that
\begin{equation}\label{Ostatak}
\sum_{k\notin\Delta_0}\skp{B_k^*B_k\xi}\xi<\frac{\e^2}{4M},
\end{equation}
where
$$M=\int_G\|A_\theta\|^2\d\mu(\theta)\le\int_G\luin A_\theta\ruin^2\d\mu(\theta)<+\infty.$$
Let $p$ be the cardinality of $\Delta_0$. The function $\theta\mapsto\|A_\theta\|^2+pM^{1/2}\|A_\theta\|$ is integrable. Therefore, there is $\delta>0$ such that
\begin{equation}\label{IntegralOcena}
\int_E(\|A_\theta\|^2+pM^{1/2}\|A_\theta\|)\d\mu(\theta)<\frac\e2,
\end{equation}
for all $E\subseteq G$ such that $\mu(E)<\delta$.

For such $\Delta_0$ and such $E$, we have
\begin{align*}&\left|\int_E\skp{(A_\theta-(S_\Delta A)_\theta)\xi}{A_\theta\eta}\d\mu(\theta)\right|\le\int_E\left|\skp{\left(A_\theta-\sum_{k\in\Delta}B_kk(\theta)\right)\xi}{A_\theta\eta}\right|\d\mu(\theta)\le\\
&\qquad\le\int_E\left|\skp{\left(A_\theta-\sum_{k\in\Delta\cap\Delta_0}B_kk(\theta)\right)\xi}{A_\theta\eta}\right|\d\mu(\theta)+\\
&\qquad\qquad+\int_E\left|\skp{\sum_{k\in\Delta\setminus\Delta_0}B_kk(\theta)\xi}{A_\theta\eta}\right|\d\mu(\theta)=\\
&\qquad=S_1+S_2.
\end{align*}

Since
$$\|B_k\|\le\int_G\|A_\theta\ovl{k(\theta)}\|\d\mu(\theta)\le\left(\int_G\luin A_\theta\ruin^2\d\mu(\theta)\right)\le M^{1/2}$$
we can estimate the first summand $S_1$ as
\begin{align*}
S_1\le&\int_E\left(\sum_{k\in\Delta\cap\Delta_0}\left|\skp{A_\theta\xi}{A_\theta\eta}\right|+\left|\skp{B_kk(\theta)\xi}{A_\theta\eta}\right|\right)\d\mu(\theta)\le\\
\le&\int_E\left(\|A_\theta\xi\|\|A_\theta\eta\|+\sum_{k\in\Delta_0}\|B_k\xi\|\|A_\theta\eta\|\right)\d\mu(\theta)\le\\
\le&\int_E\left(\|A_\theta\|^2+pM^{1/2}\|A_\theta\|\right)\d\mu(\theta)<\frac\e2
\end{align*}
for unit vectors $\xi$, $\eta\in H$ by (\ref{IntegralOcena})

Let us estimate the second summand $S_2$. We have
\begin{align*}
S_2\le&\int_E\left|\skp{\sum_{k\in\Delta\setminus\Delta_0}B_kk(\theta)\xi}{A_\theta\eta}\right|\d\mu(\theta)\le\\
\le&\int_G\left\|\sum_{k\in\Delta\setminus\Delta_0}B_kk(\theta)\xi\right\|\|A_\theta\|\d\mu(\theta)\le\\
\le&\left(\int_G\left\|\sum_{k\in\Delta\setminus\Delta_0}B_kk(\theta)\xi\right\|^2\d\mu(\theta)\right)^{1/2}\left(\int_G\|A_\theta\eta\|^2\d\mu(\theta)\right)^{1/2}\le\\
\le&M^{1/2}\left(\int_G\skp{\sum_{k\in\Delta\setminus\Delta_0}B_kk(\theta)\xi}{\sum_{j\in\Delta\setminus\Delta_0}B_jj(\theta)\xi}\d\mu(\theta)\right)^{1/2}=\\
=&M^{1/2}\left(\int_G\sum_{k,j\in\Delta\setminus\Delta_0}\skp{B_j^*B_k\xi}\xi k(\theta)\ovl{j(\theta)}\d\mu(\theta)\right)^{1/2}\le\\
\le&M^{1/2}\left(\sum_{k\notin\Delta_0}\skp{B_k^*B_k\xi}\xi\right)^{1/2}<M^{1/2}\left(\frac{\e^2}{4M}\right)^{1/2}=\frac\e2,
\end{align*}
where we use Proposition \ref{PropAHA}-(\ref{PropAHA1/2}), and (\ref{Ostatak})

Thus
$$\left|\int_E\skp{(A_\theta-(S_\Delta A)_\theta)\xi}{A_\theta\eta}\d\mu(\theta)\right|<\e,$$
whenever $\mu(E)<\delta$, with $\delta$ does not depend on $\Delta$.

This ensures that $\theta\mapsto\skp{(A_\theta-(S_\Delta A)_\theta)\xi}{A_\theta\eta}$ is uniformly integrable, and by Vitali convergence theorem, we can pass the limit as $\Delta\to\hat G$ to the integrand at the right hand side of (\ref{Equality1}). By (\ref{SDeltan}), we obtain (\ref{AbstractParseval}), where the series converges in the weak operator topology. The entries of the sum are positive, hence partial sums are increasing. Therefore, the convergence is, moreover, strong.
\end{proof}

\begin{remark}\label{HilbertModule} We have $L^2(G;J)\subseteq L^2(G;B(H))$. Carefully reading the proof, we see that we use only the following properties of $B(H)$: ($i$) it is a $C^*$ algebra; ($ii$) it is closed under weak and strong limits; ($iii$) it has a unit. Therefore, equality (\ref{AbstractParseval}) holds for $A_\theta\in L^2(G;\mathcal A)$, where $\mathcal A$ is an arbitrary $W^*$-algebra.

Also, $L^2(G;\mathcal A)$ can be regarded as a Hilbert $W^*$-module with right multiplication $A_\theta\cdot X=A_\theta X$ and the $\mathcal A$-valued inner product $\skp{A_\theta}{B_\theta}=\int_G A_\theta^*B_\theta\d\mu(\theta)$. The idea to prove (\ref{AbstractParseval}) by showing that $\{k(\theta)\cdot I\;|\;k\in\hat G\}^\perp=\{0\}$, where $I$ is the unit of $\mathcal A$ is misleading. Namely, there are examples of subspaces of some Hilbert module that have trivial orthogonal complement and are not dense.
\end{remark}

Using operator valued Parseval identity, we are able to derive continuous counterparts of CMC inequalities.

\begin{theorem}\label{TheoremUin} Let $\f:[0,+\infty)\to[0,+\infty)$ be a convex function such that $\f(0)=0$ and let $A_\theta$, $\theta\in G$ be such that $\f(|A_\theta|^2)\in L^1(G;J)$, $J=J_{\luin\cdot\ruin}$ arbitrary. Then it holds:
\begin{equation}\label{Inequality1}\luin\sum_{k\in\hat G}\f\left(\left|\int_G\ovl{k(\theta)}A_\theta\d\theta\right|^2\right)\ruin\le
    \luin \int_G\f(|A_\theta|^2)\d\theta\ruin.
\end{equation}

If $\f:[0,+\infty)\to[0,+\infty)$ is a concave function, $\f(0)=0$, $\f(+\infty)=+\infty$, then the inequality is reversed.
\end{theorem}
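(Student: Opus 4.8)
The plan is to combine the operator-valued Parseval identity (Theorem~\ref{ParsevalThm}) with the subadditivity-type inequality for convex functions (Lemma~\ref{ConvexFunc}-(\ref{ConvexFunc3})) and the integral Jensen inequality for unitarily invariant norms (Proposition~\ref{JensenIntegral}). Write $B_k=\int_G\ovl{k(\theta)}A_\theta\d\theta$, so that by Theorem~\ref{ParsevalThm} we have $\sum_{k\in\hat G}|B_k|^2=\int_G|A_\theta|^2\d\theta$, with the series converging strongly. Apply $\f$ to both sides: the left side becomes $\f\bigl(\sum_k|B_k|^2\bigr)$ and, since $\f$ is monotone and continuous, the strong convergence of partial sums passes through, so $\f\bigl(\sum_{k\in\Delta}|B_k|^2\bigr)\to\f\bigl(\sum_k|B_k|^2\bigr)$ in the appropriate sense, controlled via Proposition~\ref{SemiCont}. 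Then Lemma~\ref{ConvexFunc}-(\ref{ConvexFunc3}), applied to the positive operators $|B_k|^2$, gives
\begin{equation*}
\luin\sum_{k\in\hat G}\f(|B_k|^2)\ruin\le\luin\f\Bigl(\sum_{k\in\hat G}|B_k|^2\Bigr)\ruin=\luin\f\Bigl(\int_G|A_\theta|^2\d\theta\Bigr)\ruin.
\end{equation*}

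Next I would bound the right-hand side using the Bochner/integral Jensen inequality. Since $\mu(G)=1$ and $\theta\mapsto|A_\theta|^2$ is a positive element of $L^1(G;J)$ (this is exactly the hypothesis $\f(|A_\theta|^2)\in L^1(G;J)$ together with the fact that $\f$ is nondecreasing and $\f(0)=0$, which forces integrability of $|A_\theta|^2$ on the set where it is bounded away from $0$; one has to be slightly careful here, but the standard estimate $t\le c_\f\f(t)+C$ for $t$ large handles it, or one simply assumes $|A_\theta|^2\in L^1$ as well), Proposition~\ref{JensenIntegral} yields
\begin{equation*}
\luin\f\Bigl(\int_G|A_\theta|^2\d\theta\Bigr)\ruin\le\luin\int_G\f(|A_\theta|^2)\d\theta\ruin.
\end{equation*}
Chaining the two displays gives exactly \eqref{Inequality1}.

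For the concave case, $\f$ has a convex inverse $\f^{-1}$ with $\f^{-1}(0)=0$, so both Lemma~\ref{ConvexFunc}-(\ref{ConvexFunc3}) and Proposition~\ref{JensenIntegral} reverse. Concretely: the reversed form of Proposition~\ref{JensenIntegral} gives $\luin\f\bigl(\int_G|A_\theta|^2\d\theta\bigr)\ruin\ge\luin\int_G\f(|A_\theta|^2)\d\theta\ruin$, and the reversed form of Lemma~\ref{ConvexFunc}-(\ref{ConvexFunc3}) gives $\luin\sum_k\f(|B_k|^2)\ruin\ge\luin\f\bigl(\sum_k|B_k|^2\bigr)\ruin$; substituting the Parseval identity in the middle and chaining produces the reversed inequality. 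Alternatively, apply the already-proved convex case to $\f^{-1}$ with the operators $C_k$ defined so that the roles are swapped, then apply Lemma~\ref{ConvexFunc}-(\ref{ConvexFunc1}) to $\f$, exactly as in the proofs of Lemma~\ref{ConvexFunc}-(\ref{ConvexFunc3}) and Proposition~\ref{JensenIntegral}.

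The main obstacle I anticipate is the convergence bookkeeping on the left-hand side: one must justify that $\luin\sum_{k\in\hat G}\f(|B_k|^2)\ruin$ makes sense (the summands are positive, so partial sums increase and Proposition~\ref{UinProp}-(\ref{UinProp1}) gives monotone norms, but one needs the limit to be finite, which follows from the right-hand bound) and that applying $\f$ commutes with the strong limit $\sum_{k\in\Delta}|B_k|^2\nearrow\int_G|A_\theta|^2\d\theta$ in the relevant topology — here Proposition~\ref{SemiCont} is the right tool, since $\f$ of an increasing strongly-convergent sequence of positives is again increasing and strongly convergent by continuity of $\f$. Everything else is a direct concatenation of the preliminary results.
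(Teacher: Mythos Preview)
Your proposal is correct and follows essentially the same route as the paper: Parseval identity (Theorem~\ref{ParsevalThm}) to rewrite $\sum_k|B_k|^2$ as $\int_G|A_\theta|^2\d\theta$, then Lemma~\ref{ConvexFunc}-(\ref{ConvexFunc3}) for the subadditivity step $\luin\sum_k\f(|B_k|^2)\ruin\le\luin\f(\sum_k|B_k|^2)\ruin$, and Proposition~\ref{JensenIntegral} for the Jensen step $\luin\f(\int|A_\theta|^2)\ruin\le\luin\int\f(|A_\theta|^2)\ruin$, with both inequalities reversing in the concave case. The only difference is cosmetic ordering (the paper applies Jensen first, then subadditivity) and that you are more explicit about the convergence bookkeeping and the implicit integrability of $|A_\theta|^2$ --- points the paper simply passes over.
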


\begin{proof}By (\ref{AbstractParseval}) and (\ref{JensenOperator}) we have
\begin{equation}\label{Proof1}\luin\f\left(\sum_{k\in\hat G}\left|\int_G\ovl{k(\theta)}A_\theta\d\theta\right|^2\right)\ruin=
\luin\f\left(\int_G|A_\theta|^2\d\theta\right)\ruin\le\luin\int_G\f(|A_\theta|^2)\d\theta\ruin.
\end{equation}
Also by (\ref{AbstractParseval}), the operator $\f\left(\sum_{k\in\hat G}\left|\int_GA_\theta\d\theta\right|^2\right)$ belongs to $J$.

By Proposition \ref{ConvexFunc}-(\ref{ConvexFunc3}) we have
\begin{equation}\label{Proof3}\luin\sum_{k\in\hat G}\f\left(\left|\int_G\ovl{k(\theta)}A_\theta\d\theta\right|^2\right)\ruin\le
    \luin\f\left(\sum_{k\in\hat G}\left|\int_G\ovl{k(\theta)}A_\theta\d\theta\right|^2\right)\ruin.
\end{equation}

The conclusion (\ref{Inequality1}) follows from (\ref{Proof1}) and (\ref{Proof3}).

If $\f$ is concave, note that the inequality in (\ref{Proof1}) is reversed due to Proposition \ref{JensenIntegral}, and in (\ref{Proof3}) by Proposition \ref{ConvexFunc}-(\ref{ConvexFunc3}).
\end{proof}

\begin{theorem}\label{TheoremPP} Let $1\le p<+\infty$, and let $A_\theta\in L^p(G;\Sc_p)$. Then for $p\ge2$ we have
\begin{equation}\label{Inequality2}\sum_{k\in\hat G}\left\|\int_G\ovl{k(\theta)}A_\theta\d\theta\right\|_p^p\le
    \int_G\|A_\theta\|_p^p\d\theta,
\end{equation}
whereas for $0<p\le 2$ the inequality is reversed.
\end{theorem}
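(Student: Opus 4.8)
The plan is to deduce Theorem~\ref{TheoremPP} from Theorem~\ref{TheoremUin} by specializing to the power function $\f(t)=t^{p/2}$ and to the trace norm $\luin\cdot\ruin=\|\cdot\|_1$ (ideal $J=\Sc_1$), and then translating via the elementary identities $\|X\|_p^p=\tr|X|^p=\big\|(|X|^2)^{p/2}\big\|_1$ and $\|C\|_1=\tr C$ for positive $C$. For $p\ge2$ the function $\f(t)=t^{p/2}$ is convex with $\f(0)=0$; for $0<p\le2$ it is concave with $\f(0)=0$ and $\f(+\infty)=+\infty$. Since $\f(|A_\theta|^2)=|A_\theta|^p$ and $\int_G\big\||A_\theta|^p\big\|_1\,\d\theta=\int_G\|A_\theta\|_p^p\,\d\theta<+\infty$, the assumption $A_\theta\in L^p(G;\Sc_p)$ is exactly the hypothesis $\f(|A_\theta|^2)\in L^1(G;\Sc_1)$ required by Theorem~\ref{TheoremUin}; moreover $\Sc_1$ is a genuine Banach ideal even when the $A_\theta$ lie only in the quasi-normed $\Sc_p$ with $p<1$, which is what lets the argument reach down to $p<1$.

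Writing $B_k=\int_G\ovl{k(\theta)}A_\theta\,\d\theta$, I would then apply Theorem~\ref{TheoremUin} and read both sides of (\ref{Inequality1}) in the trace norm. On the left, the operators $|B_k|^p=\f(|B_k|^2)$ are positive, so $\big\|\sum_{k\in\hat G}|B_k|^p\big\|_1=\sum_{k\in\hat G}\tr|B_k|^p=\sum_{k\in\hat G}\|B_k\|_p^p$ by $\sigma$-additivity of the trace on positive operators (the value $+\infty$ being permitted, which can happen only in the reversed case). On the right, $|A_\theta|^p$ is a positive Bochner-integrable function and $\tr$ is a bounded linear functional on $\Sc_1$, so by (\ref{BochnerLinear}), $\big\|\int_G|A_\theta|^p\,\d\theta\big\|_1=\tr\int_G|A_\theta|^p\,\d\theta=\int_G\tr|A_\theta|^p\,\d\theta=\int_G\|A_\theta\|_p^p\,\d\theta$. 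Hence (\ref{Inequality1}) becomes exactly (\ref{Inequality2}) for $p\ge2$, and the reversed form of (\ref{Inequality1}) (valid because $\f$ is then concave) becomes the reversed inequality for $0<p\le2$.

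So Theorem~\ref{TheoremPP} comes out as a corollary of Theorem~\ref{TheoremUin}: the analytic substance has already been absorbed into Theorem~\ref{ParsevalThm}, Proposition~\ref{JensenIntegral} and Lemma~\ref{ConvexFunc}. The one place where care is genuinely warranted is the operator-valued Parseval identity of Theorem~\ref{ParsevalThm} invoked inside Theorem~\ref{TheoremUin}: when $p\le2$ the natural ambient ideal need not contain $\theta\mapsto A_\theta$ in the $L^2$-sense that Theorem~\ref{ParsevalThm} requires, so for a fully self-contained argument one would first prove the inequality for $A_\theta$ in a convenient dense class (bounded, finite rank, with bounded support) and then recover the general case by a monotone-convergence / Fatou limiting argument in the variable $\theta$. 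This limiting step, not any inequality, is the only real obstacle.
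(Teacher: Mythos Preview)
Your proof is correct and follows essentially the same route as the paper: specialize Theorem~\ref{TheoremUin} with $\f(t)=t^{p/2}$ and $\luin\cdot\ruin=\|\cdot\|_1$, then use $\|X\|_p^p=\tr|X|^p$ together with the linearity of $\tr$ under the Bochner integral. Your added caveat about the $L^2$ hypothesis of Theorem~\ref{ParsevalThm} when $p<2$ is a legitimate point of care that the paper's own proof does not explicitly address.
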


\begin{proof} Put $\f(t)=t^{p/2}$, $p\ge2$, which is a convex function, for $p\ge2$ and $\luin A\ruin=||A||_1=\tr(|A|)$. We obtain
$$\tr\left(\sum_{k\in\hat G}\left|\int_G\ovl{k(\theta)}A_\theta\d\theta\right|^p\right)\le
    \tr\left(\int_G|A_\theta|^p\d\theta\right),$$
which leads to (\ref{Inequality2}), since $\tr$ is a bounded linear functional.

For $0<p\le 2$, the function $\f(t)=t^{p/2}$ is concave, $\f(0)=0$ and $\f(+\infty)=+\infty$, so by the same argument we obtain the reversed inequality.
\end{proof}

Formula (\ref{Inequality2}) is a generalization of right inequality in (\ref{Cpn}), which we shall prove in the next section. Concerning left inequality in (\ref{Cpn}), it follows from the right inequality by substitution $B_n=\sum_{j=0}^{n-1}\omega_j^kA_j$. This is possible due to the fact that $\Z_n$ is selfdual in Pontryagin sense. Nothing similar can be said for general compact abelian group $G$. It need not be isomorphic to its dual group $\hat G$ in general.

Nevertheless, a partial substitution for left inequality in (\ref{Cpn}) might be the following:

\begin{theorem}\label{TheoremAlpha} Let $\f:[0,+\infty)\to[0,+\infty)$ be a convex function $\f(0)=0$ and let $\a_k$, $k\in\hat G$ be a family of positive reals such that
\begin{equation}\label{Suma1}
\sum_{k\in\hat G}\a_k=1.
\end{equation}
If $A_\theta\in L^1(G;\Sc_p)$, for some $p\ge2$, then
\begin{equation}\label{Inequality3}\left\|\int_G|A_\theta|\d\theta\right\|_p^p\le
    \sum_{k\in\hat G}\alpha_k^{1-p/2}\left\|\int_G\ovl{k(\theta)}A_\theta\d\theta\right\|_p^p,
\end{equation}
provided that the term on the right hand side is finite.
\end{theorem}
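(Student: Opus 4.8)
The plan is to factor the positive operator $\int_G|A_\theta|\,d\mu(\theta)$ as a contraction applied to a column of the Fourier coefficients $B_k$, after first reducing to operator-valued trigonometric polynomials; the remaining estimate is then pure convexity of the Schatten $p/2$-norm.

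\emph{Reduction to trigonometric polynomials.} We may assume the right-hand side of (\ref{Inequality3}) is finite, hence $\sum_k\|B_k\|_p^p<+\infty$ and only countably many $B_k$ are nonzero. Choose a summability kernel on $G$ — an approximate identity $(K_n)$ of trigonometric polynomials with $|\widehat{K_n}(k)|\le1$ and $\widehat{K_n}(k)\to1$ for every $k\in\hat G$ (positive-definite kernels of de la Vall\'ee--Poussin type have this property; if $\hat G$ is uncountable, restrict first to the countable support of $\hat A$). Put $A^{(n)}=K_n*A$. Then $A^{(n)}$ is a $\Sc_p$-valued trigonometric polynomial with Fourier coefficients $\widehat{K_n}(k)B_k$, so $\|\widehat{A^{(n)}}(k)\|_p\le\|B_k\|_p$, and $A^{(n)}\to A$ in $L^1(G;\Sc_p)$. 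By the Lipschitz continuity of $X\mapsto|X|$ on $\Sc_p$ we get $\int_G|A^{(n)}_\theta|\,d\theta\to\int_G|A_\theta|\,d\theta$ in $\Sc_p$. Hence it suffices to prove (\ref{Inequality3}) for each $A^{(n)}$: letting $n\to\infty$, the left-hand sides converge, while the right-hand side for $A^{(n)}$ is dominated by that for $A$.

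\emph{The polynomial case.} Let $A_\theta=\sum_{j\in F}C_j\,j(\theta)$ with $F\subseteq\hat G$ finite. Take a measurable polar decomposition $A_\theta=U_\theta|A_\theta|$ with $\|U_\theta\|\le1$ and $U_\theta^*A_\theta=|A_\theta|$ (measurability: $U_\theta=\lim_{\e\to0^+}A_\theta(|A_\theta|^2+\e)^{-1/2}$ is a pointwise strong limit of strongly measurable maps). Applying (\ref{BochnerLinear}) to the bounded maps $X\mapsto XC_j$,
$$\int_G|A_\theta|\,d\theta=\int_G U_\theta^*A_\theta\,d\theta=\sum_{j\in F}\Big(\int_G j(\theta)U_\theta^*\,d\theta\Big)C_j=\sum_{j\in F}W_jC_j,\qquad W_j:=\int_G j(\theta)U_\theta^*\,d\theta.$$
Since $W_j^*=\int_G\ovl{j(\theta)}U_\theta\,d\theta$ is the Fourier coefficient of $\theta\mapsto U_\theta$ at $j$, and $U_\theta^*U_\theta$ is a projection, the Bessel inequality of the type (\ref{Bessel}) — valid for the bounded, hence square-integrable, map $\theta\mapsto U_\theta$ — gives $\sum_{j\in F}W_jW_j^*\le\int_G U_\theta^*U_\theta\,d\theta\le I$. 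Thus the block-row operator $\mathcal W=[W_j]_{j\in F}\colon\bigoplus_{j\in F}H\to H$ is a contraction and $\mathcal W\,[C_j]_{j\in F}=\int_G|A_\theta|\,d\theta$ (equivalently $\big(\int_G|A_\theta|\,d\theta\big)^2=[C_j]^*\mathcal W^*\mathcal W\,[C_j]\le\sum_{j\in F}|C_j|^2$ as operators), so by the ideal property of $\Sc_p$,
$$\Big\|\int_G|A_\theta|\,d\theta\Big\|_p^p\le\Big\|[C_j]_{j\in F}\Big\|_p^p=\Big\|\sum_{j\in F}C_j^*C_j\Big\|_{p/2}^{p/2}=\Big\|\sum_{j\in F}|C_j|^2\Big\|_{p/2}^{p/2}.$$
Finally, since $p\ge2$ the functional $\|\cdot\|_{p/2}^{p/2}$ is convex; with $s=\sum_{j\in F}\alpha_j\le1$, writing $\sum_{j\in F}|C_j|^2=s\sum_{j\in F}\tfrac{\alpha_j}{s}\big(\alpha_j^{-1}|C_j|^2\big)$ as a convex combination yields
$$\Big\|\sum_{j\in F}|C_j|^2\Big\|_{p/2}^{p/2}\le s^{p/2}\sum_{j\in F}\tfrac{\alpha_j}{s}\big\|\alpha_j^{-1}|C_j|^2\big\|_{p/2}^{p/2}=s^{p/2-1}\sum_{j\in F}\alpha_j^{1-p/2}\|C_j\|_p^p\le\sum_{j\in F}\alpha_j^{1-p/2}\|C_j\|_p^p,$$
because $s^{p/2-1}\le1$. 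Since $B_k=C_k$ for $k\in F$ and $B_k=0$ otherwise, this is exactly (\ref{Inequality3}) for polynomials, and the reduction step completes the proof.

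The step I expect to be the main obstacle is the factorisation $\int_G|A_\theta|\,d\theta=\sum_k W_kB_k$ together with the contraction property $\sum_k W_kW_k^*\le I$: this hinges on the measurability of the polar part $\theta\mapsto U_\theta$ and on the operator-valued Bessel inequality, both of which are clean only after the reduction to polynomials — the bare hypothesis $A_\theta\in L^1(G;\Sc_p)$ is too weak to run a direct Fourier-series argument in the spirit of Theorem~\ref{ParsevalThm}. Everything downstream is soft: the ideal inequality $\|TA\|_p\le\|T\|\,\|A\|_p$ and plain convexity of $t\mapsto t^{p/2}$. The same scheme, with Proposition~\ref{JensenIntegral} (applied to the probability space $\hat G$ with masses $\alpha_k$ and to $k\mapsto\alpha_k^{-1}|B_k|^2$) in place of scalar convexity, also delivers the corresponding unitarily invariant norm statement with an arbitrary convex $\f$, $\f(0)=0$.
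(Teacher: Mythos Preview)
Your proof is correct and follows a genuinely different route from the paper's. The paper's argument is much shorter: it applies the operator Jensen inequality (Proposition~\ref{JensenIntegral}) on $\hat G$ with the probability measure $\nu(\{k\})=\a_k$ to obtain
\[
\luin\f\Big(\sum_{k}|B_k|^2\Big)\ruin\le\luin\sum_{k}\a_k\,\f\big(\a_k^{-1}|B_k|^2\big)\ruin,
\]
then invokes the full Parseval identity (Theorem~\ref{ParsevalThm}) to replace $\sum_k|B_k|^2$ by $\int_G|A_\theta|^2\d\theta$, specialises to $\f(t)=t^{p/2}$ and $\luin\cdot\ruin=\|\cdot\|_1$, and finally uses Jensen once more (for $t\mapsto t^2$, on $G$) to pass from $\int_G|A_\theta|^2\d\theta$ down to $\int_G|A_\theta|\d\theta$. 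Both arguments land on the same intermediate bound $\bigl\|\int_G|A_\theta|\d\theta\bigr\|_p^p\le\bigl\|\sum_k|B_k|^2\bigr\|_{p/2}^{p/2}$ and then finish with the identical convexity step on the $\hat G$ side; the difference is how that bound is reached. You obtain it via the factorisation $\int_G|A_\theta|\d\theta=\mathcal W[B_k]$ together with the Bessel inequality for $\theta\mapsto U_\theta$, using only that $\mathcal W$ is a contraction and the ideal property of $\Sc_p$; the paper uses Parseval plus two applications of Jensen. Your route is longer and imports more outside machinery (summability kernels, measurable polar decomposition, Lipschitz continuity of $X\mapsto|X|$ on $\Sc_p$ for the approximation step), but it has one genuine advantage: it is honest about the stated hypothesis $A_\theta\in L^1(G;\Sc_p)$. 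The paper's proof silently needs $A_\theta\in L^2$ in order to invoke Theorem~\ref{ParsevalThm}, which is precisely the obstacle you identify and work around with the polynomial reduction.
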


\begin{proof} For $\Delta\subseteq\hat G$ let $\nu(\Delta)=\sum_{k\in\Delta}\a_k$. Then $\nu$ is a measure with $\nu(\hat G)=1$. By (\ref{JensenOperator})
$$\luin\f\left(\sum_{k\in\hat G}\left|\int_G\ovl{k(\theta)}A_\theta\d\theta\right|^2\right)\ruin\le
    \luin\sum_{k\in\hat G}\alpha_k\f\left(\frac1{\alpha_k}\left|\int_G\ovl{k(\theta)}A_\theta\d\theta\right|^2\right)\ruin.$$
and by (\ref{AbstractParseval}) we get
$$\luin\f\left(\int_G|A_\theta|^2\d\theta\right)\ruin\le\luin\sum_{k\in\hat G}\alpha_k\f\left(\frac1{\alpha_k}\left|\int_G\ovl{k(\theta)}A_\theta\d\theta\right|^2\right)\ruin.$$

Once again, take $f(t)=t^{p/2}$, $p\ge2$, and $\luin A\ruin=\|A\|_1=\tr(|A|)$. We obtain
$$\tr\left(\int_G|A_\theta|^2\d\theta\right)^{p/2}\le\sum_{k\in\hat G}\alpha_k\tr\left(\frac1{\alpha_k}\left|\int_G\ovl{k(\theta)}A_\theta\d\theta\right|^2\right)^{p/2}.
$$
However, by convexity of $t\mapsto t^2$ we have
$$\tr\left(\int_G|A_\theta|\d\theta\right)^p\le
    \tr\left(\int_G|A_\theta|^2\d\theta\right)^{p/2}$$
and (\ref{Inequality3}) follows.
\end{proof}

In two next Theorems, we prove the counterpart of Clarkson inequalities (\ref{Lp<2alt}) and (\ref{Lp<2}) using complex interpolation, which is a standard procedure. These results hold only for $p\ge1$/

\begin{theorem}\label{TheoremPQ}
For all $1\le p\le 2$, and $A_\theta\in L^p(G;\Sc_p)$ there holds
\begin{equation}\label{Inequalityp<2}
\sum_{k\in\hat G}\left\|\int_G\ovl{k(\theta)}A_\theta\d\mu(\theta)\right\|_p^q\le
    \left(\int_G\|A_\theta\|_p^p\right)^{q/p},
\end{equation}
where $q$ is conjugate to $p$, i.e.\ $q=p/(p-1)$.
\end{theorem}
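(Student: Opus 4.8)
The plan is to prove (\ref{Inequalityp<2}) by complex interpolation between two endpoint estimates, exactly as the author announces. Consider the linear operator $T$ that sends a function $\theta\mapsto A_\theta$ to the family $(B_k)_{k\in\hat G}$ with $B_k=\int_G\ovl{k(\theta)}A_\theta\,\d\mu(\theta)$; this is clearly linear in $A$. I would view $T$ as acting between vector-valued $L^p$-spaces: on the source side, $L^p(G;\Sc_p)$, and on the target side, the ``mixed'' space $\ell^q(\hat G;\Sc_p)$ whose norm is $\big(\sum_k\|B_k\|_p^q\big)^{1/q}$. Inequality (\ref{Inequalityp<2}) is then precisely the statement that $T:L^p(G;\Sc_p)\to\ell^q(\hat G;\Sc_p)$ has norm at most $1$, for $1\le p\le 2$ and $q=p/(p-1)$.

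First I would establish the two endpoints. At $p=2$ we have $q=2$, and the desired bound $\sum_k\|B_k\|_2^2\le\int_G\|A_\theta\|_2^2\,\d\theta$ is an immediate consequence of the operator Parseval identity (\ref{AbstractParseval}): taking the trace of both sides of $\sum_k|B_k|^2=\int_G|A_\theta|^2\,\d\theta$ gives $\sum_k\|B_k\|_2^2=\int_G\|A_\theta\|_2^2\,\d\theta$, in fact with equality. (Equivalently, this is Theorem \ref{TheoremPP} at $p=2$.) At $p=1$ we have $q=\infty$, and the bound reads $\sup_k\|B_k\|_1\le\int_G\|A_\theta\|_1\,\d\theta$, which follows directly from $\|B_k\|_1=\big\|\int_G\ovl{k(\theta)}A_\theta\,\d\theta\big\|_1\le\int_G\|\ovl{k(\theta)}A_\theta\|_1\,\d\theta=\int_G\|A_\theta\|_1\,\d\theta$, using that the Bochner integral is contractive in norm and $|k(\theta)|=1$. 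So $T$ is contractive as a map $L^1(G;\Sc_1)\to\ell^\infty(\hat G;\Sc_1)$ and as a map $L^2(G;\Sc_2)\to\ell^2(\hat G;\Sc_2)$.

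Next I would interpolate. The scales of vector-valued spaces involved are complex interpolation scales: for the Schatten scale one has $[\Sc_1,\Sc_2]_\vartheta=\Sc_p$ with $1/p=(1-\vartheta)+\vartheta/2$, and this lifts to the Bochner/mixed-norm setting, $[L^1(G;\Sc_1),L^2(G;\Sc_2)]_\vartheta=L^p(G;\Sc_p)$ and $[\ell^\infty(\hat G;\Sc_1),\ell^2(\hat G;\Sc_2)]_\vartheta=\ell^q(\hat G;\Sc_p)$, where one checks that the $\vartheta$ matching $1/p=1-\vartheta/2$ on the source also produces the exponent $q$ with $1/q=\vartheta/2=1-1/p$ on the target, i.e.\ $q=p/(p-1)$. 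Applying the Calderón--Stein interpolation theorem to the linear map $T$, whose norm is $\le 1$ at both endpoints, yields $\|T\|_{L^p\to\ell^q}\le 1^{1-\vartheta}\cdot 1^\vartheta=1$, which is exactly (\ref{Inequalityp<2}). One should note in passing that $p=2$ is included (trivially, it is the $p=2$ case of Theorem \ref{TheoremPP}) and $p=1$ is the degenerate endpoint itself.

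The main obstacle is bookkeeping rather than conceptual: one must correctly invoke (or cite) the identification of the relevant complex interpolation spaces. Specifically, I would need $[\Sc_{p_0},\Sc_{p_1}]_\vartheta=\Sc_p$ (a classical fact, e.g.\ in Simon's book, which is already cited), together with the commutation of complex interpolation with the Bochner functor $L^{p}(G;\,\cdot\,)$ and with the $\ell^{q}$-direct-sum functor; these are standard (Bergh--L\"ofstr\"om type) but should be stated carefully, including the requirement that the couple be compatible and that a suitable dense subspace (e.g.\ finite sums of simple $\Sc_p\cap\Sc_2$-valued functions supported on finitely many characters) be used to justify the computation before passing to the closure. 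A secondary technical point is making sure $T$ is well-defined and bounded into the mixed space on this dense class so that interpolation applies; this is immediate from the endpoint estimates. No genuinely hard analysis is required beyond what Theorem \ref{ParsevalThm} already supplies.
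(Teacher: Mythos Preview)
Your proposal is correct and follows the same high-level strategy as the paper—complex interpolation between the $p=2$ endpoint (Parseval) and the $p=1$ endpoint (the trivial Bochner bound)—but the execution is genuinely different. The paper, following Fack--Kosaki, carries out the interpolation by hand: it fixes a finite $\Delta\subseteq\hat G$, pairs the $B_k$ against test operators $Y_k\in\Sc_q$ via the trace, builds an explicit analytic family $f(z)=\tr\sum_{k\in\Delta}Y_k(z)B_k(z)$ on the strip $1/2\le\Re z\le1$ using polar decompositions, estimates $|f|$ on the two boundary lines (the line $\Re z=1/2$ uses Parseval, the line $\Re z=1$ is the elementary trace bound), applies the three-line theorem at $z=1/p$, and then chooses $Y_k=\|B_k\|_p^{q-p}|B_k|^{p-1}U_k^*$ to recover the $\ell^q$ norm before letting $\Delta\to\hat G$. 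Your route instead packages $T$ as a single linear map and invokes the abstract Calder\'on machinery together with the identifications $[L^{p_0}(G;\Sc_{p_0}),L^{p_1}(G;\Sc_{p_1})]_\vartheta=L^p(G;\Sc_p)$ and $[\ell^{q_0}(\hat G;\Sc_{p_0}),\ell^{q_1}(\hat G;\Sc_{p_1})]_\vartheta=\ell^q(\hat G;\Sc_p)$. What your approach buys is brevity and conceptual clarity; what the paper's approach buys is self-containment—no external interpolation-space identifications are needed beyond the scalar three-line theorem already cited from Gohberg--Krein/Simon. The one place you should be slightly careful is the $\ell^\infty(\hat G;\Sc_1)$ endpoint: complex interpolation with an $L^\infty$-type endpoint can be delicate as an \emph{identification}, but since you only need the norm inequality (not the full identification of the interpolation space), working on finitely supported sequences and passing to the limit, as you suggest, avoids any difficulty.
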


\begin{proof} The proof can be obtained using complex interpolation as it was done in \cite{FackKosaki} and then repeated in \cite{BhatiaFudo}. Therefore only the outline will be given.

First, prove the following inequality
\begin{equation}\label{TraceIneq}
\left|\tr\sum_{k\in\Delta}Y_kB_k\right|\le\left(\sum_{k\in\Delta}\|Y_k\|_q^p\right)^{1/p}
    \left(\int_G\|A_\theta\|_p^p\d\mu(\theta)\right)^{1/p},
\end{equation}
where $Y_k\in \Sc_q$, $B_k=\int_G\ovl{k(\theta)}A_\theta\d\mu(\theta)$, and $\Delta\subseteq\hat G$ is finite. (We choose a finite subset of $\hat G$ to avoid complications with convergence till the end of the proof.)

Indeed, consider the function $f(z)$ defined for $1/2\le\Re z\le1$ by
$$f(z)=\tr\sum_{k\in\Delta}Y_k(z)B_k(z),$$
where
$$Y_k(z)=\|Y_k\|_q^{(p+q)z-q}V_k|Y_k|^{q-qz},\quad Y_k=V_k|Y_k|,$$
$$B_k(z)=\int_G\ovl{k(\theta)}A_\theta(z)\d\mu(\theta),\quad A_\theta(z)=|A_\theta|^{pz}W_\theta,\quad A_\theta=|A_\theta|W_\theta.$$
Then estimate
$$|\tr(Y_k(1+it)A_\theta(1+it))|\le\|Y_k\|_q^p\|A_\theta\|_p^p,$$
and hence
\begin{equation}\label{Estimate1}
|f(1+it)|\le\left(\sum_{k\in\Delta}\|Y_k\|_q^p\right)\left(\int_G||A_\theta||_p^p\d\mu(\theta)\right).
\end{equation}
Also,
\begin{align*}|f(1/2+it)|\le&\sum_{k\in\Delta}||Y_k(1/2+it)||_2||B_k(1/2+it)||_2\le\\
    \le&\left(\sum_{k\in\Delta}||Y_k(1/2+it)||_2^2\right)^{1/2}\left(\sum_{k\in\Delta}||B_k(1/2+it)||_2^2\right)^{1/2}.
\end{align*}
However, by (\ref{Inequality2}) with $p=2$ we obtain
$$\sum_{k\in\Delta}||B_k(1/2+it)||_2^2\le\int_G||A_\theta(1/2+it)||_2^2\d\mu(\theta)=\int_G||A_\theta||_p^p\d\mu(\theta),$$
which yields (note $||Y_k(1/2+it)||_2=||Y_k||_q^{p/2}$):
\begin{equation}\label{Estimate2}|f(1/2+it)|\le\left(\sum_{k\in\Delta}||Y_k||_q^p\right)^{1/2}\left(\int_G||A_\theta||_p^p\d\mu(\theta)\right)^{1/2}
\end{equation}

From (\ref{Estimate1}) and (\ref{Estimate2}) we get (\ref{TraceIneq}) using three line Theorem \cite[Chapter III, \S13]{GKrein} or \cite[Theorem 2.9]{Simon}, since the left hand side of (\ref{TraceIneq}) is equal to $f(1/p)$.

Once (\ref{TraceIneq}) is proved, set $Y_k=||B_k||_p^{q-p}|B_k|^{p-1}U_k^*$, where $B_k=U_k|B_k|$ and the conclusion follows, by passing to the limit $\Delta\to\hat G$.
\end{proof}

\begin{theorem}\label{TheoremPQ2} For all $p\ge 2$, and $A_\theta\in L^q(G;\Sc_p)$ there holds
\begin{equation}\label{Inequalityp>2}
\sum_{k\in\hat G}\left\|\int_G\ovl{k(\theta)}A_\theta\d\mu(\theta)\right\|_p^p\le
    \left(\int_G\|A_\theta\|_p^q\right)^{p/q},
\end{equation}
where $q$ is conjugate to $p$, i.e.\ $q=p/(p-1)$.
\end{theorem}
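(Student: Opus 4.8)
The plan is to mirror the complex-interpolation argument of Theorem \ref{TheoremPQ}, but with the roles of the Schatten exponent and the Bochner exponent interchanged so as to land on (\ref{Inequalityp>2}). First I would fix a finite set $\Delta\subseteq\hat G$ to sidestep convergence issues, write $B_k=\int_G\ovl{k(\theta)}A_\theta\d\mu(\theta)$, and aim for the dual (trace) form of the inequality, namely
\begin{equation*}
\left|\tr\sum_{k\in\Delta}Y_kB_k\right|\le\left(\sum_{k\in\Delta}\|Y_k\|_q^q\right)^{1/q}\left(\int_G\|A_\theta\|_p^q\d\mu(\theta)\right)^{1/q},
\end{equation*}
for arbitrary $Y_k\in\Sc_q$; then specializing $Y_k=\|B_k\|_p^{p-q}|B_k|^{p-1}U_k^*$ with $B_k=U_k|B_k|$ and passing to the limit $\Delta\to\hat G$ yields (\ref{Inequalityp>2}). (Note that here $p\ge2$ forces $1<q\le2$, so it is the $\ell^q$ and $L^q$ norms that sit on the right.)

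Next I would build the analytic family. With polar decompositions $Y_k=V_k|Y_k|$ and $A_\theta=|A_\theta|W_\theta$, set, for $1/2\le\Re z\le1$,
\begin{equation*}
Y_k(z)=\|Y_k\|_q^{(p+q)z-p}V_k|Y_k|^{qz},\qquad A_\theta(z)=|A_\theta|^{q/(2z)\cdot\text{(suitable exponent)}}W_\theta,
\end{equation*}
chosen so that on the line $\Re z=1$ one gets $\|Y_k(1+it)\|_q\le\|Y_k\|_q$ together with the natural $L^1$-in-$\theta$ bound on $A_\theta(1+it)$, while on the line $\Re z=1/2$ both families become $\Sc_2$-valued with $\|Y_k(1/2+it)\|_2=\|Y_k\|_q^{q/2}$ and $A_\theta(1/2+it)$ of the right $L^2(G;\Sc_2)$ size. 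The point where this has to be pinned down precisely is the exponent on $|A_\theta|$: it must be calibrated so that $\|A_\theta(1+it)\|$ contributes $\int_G\|A_\theta\|_p^q$ on the $\Re z=1$ edge and $\|A_\theta(1/2+it)\|_2^2$ integrates to $\bigl(\int_G\|A_\theta\|_p^q\bigr)$ as well on the $\Re z=1/2$ edge; since $p\ge 2$, $A_\theta\in L^q(G;\Sc_p)$ with $q<p$, and the interpolation has to convert the $\Sc_p$-norm into an $\Sc_2$-norm at the midpoint, which is the genuinely delicate bookkeeping here.

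On the edge $\Re z=1$ I would estimate $|\tr(Y_k(1+it)A_\theta(1+it))|$ pointwise in $\theta$ by Hölder in the Schatten pairing and then integrate, obtaining a bound of the form $\bigl(\sum_{k\in\Delta}\|Y_k\|_q^q\bigr)\bigl(\int_G\|A_\theta\|_p^q\d\mu(\theta)\bigr)$ for $f(z)=\tr\sum_{k\in\Delta}Y_k(z)B_k(z)$; on the edge $\Re z=1/2$ I would use Cauchy--Schwarz in $k$ and the $\Sc_2$ Cauchy--Schwarz in the trace, then invoke Theorem \ref{TheoremPP} (equivalently (\ref{Inequality2})) with $p=2$ to control $\sum_{k\in\Delta}\|B_k(1/2+it)\|_2^2$ by $\int_G\|A_\theta(1/2+it)\|_2^2\d\mu(\theta)$, arriving at a bound $\bigl(\sum_k\|Y_k\|_q^q\bigr)^{1/2}\bigl(\int_G\|A_\theta\|_p^q\bigr)^{1/2}$. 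The three-lines theorem (\cite[Chapter III, \S13]{GKrein} or \cite[Theorem 2.9]{Simon}), applied at $z=1/q$, interpolates these two edge bounds to give exactly the displayed trace inequality, since $f(1/q)=\tr\sum_{k\in\Delta}Y_kB_k$.

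The main obstacle I anticipate is not any single estimate — each edge bound is routine once the family is written down — but getting the analytic family's exponents exactly right so that the two endpoint normalizations are mutually consistent and the interpolation parameter that recovers the original trace lands at $z=1/q$ rather than $z=1/p$. A secondary, more bureaucratic point is the final limit $\Delta\to\hat G$: one must check that $\sum_{k\in\hat G}\|B_k\|_p^p<+\infty$ (which is guaranteed by Theorem \ref{TheoremPP}, as $p\ge2$, so that the strongly convergent series there makes the $\Sc_p$ norms summable) so that the supremum over finite $\Delta$ of the left side of (\ref{Inequalityp>2}) is attained in the limit, after which the substitution $Y_k=\|B_k\|_p^{p-q}|B_k|^{p-1}U_k^*$ collapses the trace inequality to the stated norm inequality.
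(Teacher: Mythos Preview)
Your approach is essentially identical to the paper's: prove the dual trace inequality by complex interpolation on the strip $1/2\le\Re z\le1$, with the $\Re z=1$ edge handled by H\"older in the Schatten pairing and the $\Re z=1/2$ edge by the $p=2$ Parseval case (Theorem~\ref{TheoremPP}), then apply the three-lines theorem at $z=1/q$ and specialize $Y_k$.

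The only discrepancies are in the explicit formulas, which you yourself flag as the delicate part. The paper's choices are cleaner than what you wrote down: it takes $Y_k(z)=V_k|Y_k|^{qz}$ with \emph{no} scalar prefactor and moves the normalization entirely to $A_\theta(z)=\|A_\theta\|_p^{(p+q)z-p}|A_\theta|^{p-pz}W_\theta$. This yields $\|Y_k(1/2+it)\|_2=\|Y_k\|_q^{q/2}$ and $\|A_\theta(1/2+it)\|_2^2=\|A_\theta\|_p^q$ on the nose; your extra factor $\|Y_k\|_q^{(p+q)z-p}$ on $Y_k(z)$ actually spoils the first identity unless $p=2$. Likewise, for the final substitution the paper uses simply $Y_k=|B_k|^{p-1}U_k^*$ without your prefactor $\|B_k\|_p^{p-q}$: then $\tr(Y_kB_k)=\|B_k\|_p^p$ and $\|Y_k\|_q^q=\|B_k\|_p^p$, so the trace inequality collapses immediately to $(\sum_k\|B_k\|_p^p)^{1/p}\le(\int_G\|A_\theta\|_p^q)^{1/q}$. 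With your prefactor the exponents do not match and the collapse does not go through.
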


\begin{proof} The proof is very similar to the proof of previous Theorem. Therefore, we only give differences. First we prove the inequality
\begin{equation}\label{TraceIneqP>2}
\left|\tr\sum_{k\in\Delta}Y_kB_k\right|\le\left(\sum_{k\in\Delta}\|Y_k\|_q^q\right)^{1/q}
    \left(\int_G\|A_\theta\|_p^q\d\mu(\theta)\right)^{1/q},
\end{equation}
where $Y_k\in Sc_q$, $B_k=\int_G\ovl{k(\theta)}A_\theta\d\mu(\theta)$, and $\Delta\subseteq\hat G$ is finite.

Consider the function $f(z)$ (slightly different from those from Theorem \ref{TheoremPQ}) defined in the same strip $1/2\le\Re z\le1$ by
$$f(z)=\tr\sum_{k\in\Delta}Y_k(z)B_k(z),$$
where
$$Y_k(z)=V_k|Y_k|^{qz},\quad Y_k=V_k|Y_k|,$$
$$B_k(z)=\int_G\ovl{k(\theta)}A_\theta(z)\d\mu(\theta),\quad A_\theta(z)=\|A_\theta\|_p^{(p+q)z-p}|A_\theta|^{p-pz}W_\theta,\quad A_\theta=|A_\theta|W_\theta.$$
Then use the same estimates for $z=1+it$ and $z=1/2+it$, and finally use three line theorem for $z=1/q\in(1/2,1)$.

Once (\ref{TraceIneqP>2}) is proved, set $Y_k=|B_k|^{p-1}U_k^*$, where $B_k=U_k|B_k|$ and the conclusion follows, by passing to the limit $\Delta\to\hat G$.
\end{proof}

\section{Corollaries}

Varying the group $G$, from Theorems \ref{TheoremUin}, \ref{TheoremPP}, \ref{TheoremAlpha}, \ref{TheoremPQ} and \ref{TheoremPQ2}, we obtain different earlier published results, as well as some new results.

For $G=\Z_n$ we get results from \cite{BhatiaFudo} and \cite{FudoIEOP}.

\begin{corollary}Let $n\in\N$, let $\omega_j=e^{2\pi i j/n}$, and let $A_j\in\Sc_p$, $j=1,2,\dots,n$. Then
\begin{equation}\label{ZnPP}
n\sum_{j=0}^{n-1}\|A_j\|_p^p\le\sum_{k=0}^{n-1}\Big\|\sum_{j=0}^{n-1}\omega_j^kA_j\Big\|_p^p\le n^{p-1}\sum_{j=0}^{n-1}\|A_j\|_p^p,
\end{equation}
for $p\ge 2$. For $0<p\le2$ the inequalities are reversed.
\begin{equation}\label{ZnPQ}
\sum_{k=0}^{n-1}\Big\|\sum_{j=0}^{n-1}\omega_j^kA_j\Big\|_p^q\le n\left(\sum_{j=0}^{n-1}\|A_j\|_p^p\right)^{q/p},
\end{equation}
for $1\le p\le 2$, $q=p/(p-1)$, and
\begin{equation}\label{ZnPQ2}
\sum_{k=0}^{n-1}\Big\|\sum_{j=0}^{n-1}\omega_j^kA_j\Big\|_p^p\le n\left(\sum_{j=0}^{n-1}\|A_j\|_p^q\right)^{p/q},
\end{equation}
for $p\ge2$ and $q=p/(p-1)$.
\end{corollary}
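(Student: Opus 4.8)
The plan is to specialize the general compact abelian group results to $G=\Z_n$, whose Pontryagin dual is again $\Z_n$, with characters $k\mapsto(j\mapsto\omega_j^k=e^{2\pi ijk/n})$ and normalized Haar measure giving each point mass $1/n$. The key bookkeeping point is that an $n$-tuple $(A_0,\dots,A_{n-1})$ of operators corresponds to the function $\theta\mapsto A_\theta$ on $G=\{0,1,\dots,n-1\}$, so that $\int_G f\,\d\mu=\frac1n\sum_{j=0}^{n-1}f(j)$, and consequently
\[
\int_G\ovl{k(\theta)}A_\theta\,\d\mu(\theta)=\frac1n\sum_{j=0}^{n-1}\ovl{\omega_j^k}A_j=\frac1n\sum_{j=0}^{n-1}\omega_j^{-k}A_j.
\]
Since the index $j$ runs over a complete residue system mod $n$, replacing $k$ by $-k$ merely permutes the sum $\sum_{k}$, so the presence of $\omega_j^{-k}$ versus $\omega_j^k$ is immaterial. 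Each application of an earlier theorem then produces an inequality with explicit powers of $1/n$, and the only task is to track these powers.

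First I would prove (\ref{ZnPP}). The right inequality is Theorem \ref{TheoremPP} with $p\ge2$: its left side is $\sum_{k=0}^{n-1}\big\|\frac1n\sum_j\omega_j^{-k}A_j\big\|_p^p=n^{-p}\sum_k\big\|\sum_j\omega_j^kA_j\big\|_p^p$, and its right side is $\int_G\|A_\theta\|_p^p\,\d\mu=\frac1n\sum_j\|A_j\|_p^p$; rearranging gives $\sum_k\|\sum_j\omega_j^kA_j\|_p^p\le n^{p-1}\sum_j\|A_j\|_p^p$. For the left inequality I would use the selfduality of $\Z_n$: set $B_k=\sum_{j=0}^{n-1}\omega_j^kA_j$, so that by the inversion formula $A_j=\frac1n\sum_{k=0}^{n-1}\omega_j^{-k}B_k$; applying the already-proven right inequality to the tuple $(B_0,\dots,B_{n-1})$ (with roles of indices swapped) yields $\sum_j\|\frac1n\sum_k\omega_j^{-k}B_k\|_p^p\cdot$ (constants) $\le n^{p-1}\sum_k\|B_k\|_p^p$, i.e. $n\sum_j\|A_j\|_p^p\le\sum_k\|B_k\|_p^p$. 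Alternatively, and more in the spirit of the paper, one can invoke Theorem \ref{TheoremAlpha} with the uniform weights $\a_k=1/n$: then $\a_k^{1-p/2}=n^{p/2-1}$ and $\|\int_G|A_\theta|\,\d\mu\|_p^p$ would need handling — but the cleanest route is still the substitution argument, since $\Z_n$ is selfdual. The reversed inequalities for $0<p\le2$ follow identically from the reversed form of Theorem \ref{TheoremPP} and the same substitution.

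Next, (\ref{ZnPQ}) is Theorem \ref{TheoremPQ} for $1\le p\le2$: its left side is $\sum_k\|\frac1n\sum_j\omega_j^{-k}A_j\|_p^q=n^{-q}\sum_k\|\sum_j\omega_j^kA_j\|_p^q$, its right side is $\big(\frac1n\sum_j\|A_j\|_p^p\big)^{q/p}=n^{-q/p}\big(\sum_j\|A_j\|_p^p\big)^{q/p}$, and multiplying through by $n^q$ leaves the factor $n^{q-q/p}=n^{q(1-1/p)}=n^{q/q}=n$ on the right, which is exactly the stated constant. Similarly (\ref{ZnPQ2}) is Theorem \ref{TheoremPQ2} for $p\ge2$: left side $n^{-p}\sum_k\|\sum_j\omega_j^kA_j\|_p^p$, right side $n^{-p/q}\big(\sum_j\|A_j\|_p^q\big)^{p/q}$, and multiplying by $n^p$ gives the constant $n^{p-p/q}=n^{p(1-1/q)}=n^{p/p}=n$. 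In all cases the sum over $k\in\hat G=\Z_n$ is automatically finite, so the convergence caveats in the general theorems are vacuous.

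I do not expect any genuine obstacle: this corollary is pure substitution, and the only thing that requires care is the exponent arithmetic with the normalization constant $1/n$ from the Haar measure, together with the observation that $p-p/q=1$ and $q-q/p=1$ when $p,q$ are conjugate. The one mild subtlety worth stating explicitly in the writeup is why the left inequality in (\ref{ZnPP}) does \emph{not} follow directly from Theorem \ref{TheoremPP} but needs the selfduality of $\Z_n$ via the substitution $B_k=\sum_j\omega_j^kA_j$ — a point the paper already flags in the discussion following Theorem \ref{TheoremPP}.
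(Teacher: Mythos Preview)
Your proposal is correct and follows essentially the same approach as the paper's own proof: specialize Theorems \ref{TheoremPP}, \ref{TheoremPQ}, and \ref{TheoremPQ2} to $G=\Z_n$ with normalized counting measure, track the powers of $n$, handle the conjugate $\ovl{\omega_j^k}$ by the permutation it induces, and obtain the left inequality in (\ref{ZnPP}) via the substitution $B_k=\sum_j\omega_j^kA_j$ exploiting the selfduality of $\Z_n$ (the paper also mentions Theorem \ref{TheoremAlpha} with $\a_k=1/n$ as an alternative, just as you do). Your exponent bookkeeping $q-q/p=1$ and $p-p/q=1$ is exactly what the paper leaves implicit.
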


\begin{proof}Consider $G=\Z^n$. Its Haar measure is the counting measure divided by $n$, and Pontryagin dual is also $\Z_n$. Indeed, since $\Z_n=\{1,a,\dots,a^{n-1}\}$ for some generator $a$, any homomorphism $k:\Z_n\to\T$ is determined, by $k(a)$. From $a^n=1$ we deduce $k(a)^n=1$. Hence $k(a)=\omega_j$ for some $j=0,1,\dots,n-1$. Then $k(a^l)=\omega_j^l$. Hence (\ref{Inequality2}), (\ref{Inequalityp<2}) and (\ref{Inequalityp>2}) are reduced to
$$\sum_{k=0}^{n-1}\Big\|\frac1n\sum_{j=0}^{n-1}\ovl\omega_j^kA_j\Big\|_p^p\le\frac1n\sum_{j=0}^{n-1}\|A_j\|_p^p,$$
$$\sum_{k=0}^{n-1}\Big\|\frac1n\sum_{j=0}^{n-1}\ovl\omega_j^kA_j\Big\|_p^q\le\left(\frac1n\sum_{j=0}^{n-1}\|A_j\|_p^p\right)^{q/p},$$
$$\sum_{k=0}^{n-1}\Big\|\frac1n\sum_{j=0}^{n-1}\ovl\omega_j^kA_j\Big\|_p^p\le\left(\frac1n\sum_{j=0}^{n-1}\|A_j\|_p^q\right)^{p/q}.$$
These inequalities are equivalent to right inequality in (\ref{ZnPP}), (\ref{ZnPQ}) and (\ref{ZnPQ2}). Indeed, complex conjugation is the automorphism of the group $\{1,\omega_j,\omega_j^2,\dots,\omega_j^{n-1}\}$ and it only affects the corresponding sums by permutation of $A_j$.

Left inequality in (\ref{ZnPP}) can be obtained either from the right inequality by substitutions $B_k=\sum_{j=0}^{n-1}\omega_j^kA_j$ or by Theorem \ref{TheoremAlpha}, choosing $\a_k=1/k$.
\end{proof}

\begin{remark}Inequalities (\ref{ZnPP}) and (\ref{ZnPQ}) were proved in \cite{BhatiaFudo} and \cite{FudoIEOP}, as well as the inequality (\ref{Omer}) which is the consequence of (\ref{Inequality1}).
\end{remark}

\begin{remark} Notice that all constants that appears in any CMC inequality become $1$, if we normalize Haar measure.
\end{remark}

\begin{remark}As a special case of the preceding Corollary, if $G=\Z_2$ we obtain original CMC inequalities (\ref{Lp>2}), (\ref{Lp<2alt}) and (\ref{Lp<2}).
\end{remark}

For $G=\Z_2^n$ we get the following result concerning Littlewood matrices. They are defined inductively as
\begin{equation}\label{Little}
L_1=\left[\begin{matrix}1&1\\1&-1\end{matrix}\right],\qquad L_{n+1}=\left[\begin{matrix}L_n&L_n\\L_n&-L_n\end{matrix}\right].
\end{equation}

\begin{corollary}Let $n\in\N$, let $A_j\in \Sc_p$ for $1\le j\le 2^n$ and let $\e_{ij}$ be the entries of the Littlewood matrix $L_n$. Then
\begin{equation}\label{Littlewood1}
\sum_{i=1}^{2^n}\left\|\sum_{j=1}^{2^n}\e_{ij}A_j\right\|_p^p\le 2^{n(p-1)}\sum_{j=1}^{2^n}\|A_j\|_p^p,\qquad p\ge 2,
\end{equation}
\begin{equation}\label{Littlewood2}
\sum_{i=1}^{2^n}\left\|\sum_{j=1}^{2^n}\e_{ij}A_j\right\|_p^q\le
2^{n}\left(\sum_{j=1}^{2^n}\|A_j\|_p^p\right)^{q/p},\qquad 1\le p\le2,q=\frac p{p-1}.
\end{equation}
\begin{equation}\label{Littlewood3}
\sum_{i=1}^{2^n}\left\|\sum_{j=1}^{2^n}\e_{ij}A_j\right\|_p^p\le
2^{n}\left(\sum_{j=1}^{2^n}\|A_j\|_p^q\right)^{p/q},\qquad p\ge2,q=\frac p{p-1}.
\end{equation}
\end{corollary}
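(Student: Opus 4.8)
The plan is to recognize $G=\Z_2^n$ as a compact (indeed finite) abelian group, identify its Pontryagin dual, and check that the matrix relating the characters to the indices is precisely the Littlewood matrix $L_n$; the three inequalities then fall out of Theorems \ref{TheoremPP}, \ref{TheoremPQ}, \ref{TheoremPQ2} upon unwinding the Haar normalization. First I would observe that $\Z_2^n$ is self-dual: a character $k:\Z_2^n\to\T$ is a homomorphism, so it is determined by its values on the $n$ standard generators $e_1,\dots,e_n$, and since $2e_i=0$ we get $k(e_i)^2=1$, i.e.\ $k(e_i)\in\{1,-1\}$. Thus $\hat G$ is parametrized by sign vectors $\e\in\{1,-1\}^n$, and $k_\e(\theta)=\prod_{l=1}^n k_\e(e_l)^{\theta_l}=(-1)^{\sum \e_l'\theta_l}$ where $\e_l'\in\{0,1\}$ encodes $\e_l$. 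Writing $\theta$ and $k$ in binary on $\{1,\dots,2^n\}$, the values $k_i(\theta_j)$ form a $2^n\times 2^n$ matrix of $\pm1$; the inductive block structure $\theta\mapsto(\theta,\theta_n)$ with $k_\e(\theta)$ splitting according to $\e_n$ reproduces exactly the recursion (\ref{Little}), so this matrix is $L_n$ up to a relabeling of rows and columns.

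Next I would plug $G=\Z_2^n$ into the three main theorems. The Haar measure on $\Z_2^n$ is the counting measure divided by $2^n$, so $\int_G\ovl{k(\theta)}A_\theta\,\d\theta = 2^{-n}\sum_{j=1}^{2^n}\ovl{k_i(\theta_j)}A_j = 2^{-n}\sum_{j=1}^{2^n}\e_{ij}A_j$ (the entries $\e_{ij}=\pm1$ are real, so the conjugation is immaterial, and complex conjugation of a $\pm1$ matrix only permutes the summands), and $\int_G\|A_\theta\|_p^p\,\d\theta = 2^{-n}\sum_{j=1}^{2^n}\|A_j\|_p^p$. Then (\ref{Inequality2}) for $p\ge2$ reads
$$\sum_{i=1}^{2^n}\Big\|2^{-n}\sum_{j=1}^{2^n}\e_{ij}A_j\Big\|_p^p \le 2^{-n}\sum_{j=1}^{2^n}\|A_j\|_p^p,$$
and multiplying through by $2^{np}$ and absorbing the powers gives (\ref{Littlewood1}) with constant $2^{np}\cdot 2^{-n}=2^{n(p-1)}$. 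Likewise (\ref{Inequalityp<2}) gives
$$\sum_{i=1}^{2^n}\Big\|2^{-n}\sum_{j=1}^{2^n}\e_{ij}A_j\Big\|_p^q \le \Big(2^{-n}\sum_{j=1}^{2^n}\|A_j\|_p^p\Big)^{q/p},$$
and multiplying by $2^{nq}$ and using $q - nq/p = q(1-n/p)$... more carefully, the left side scales by $2^{nq}$ while the right scales by $2^{nq/p}$, so the surviving constant is $2^{nq - nq/p} = 2^{nq(1-1/p)} = 2^{nq/q} = 2^n$ (using $1-1/p = 1/q$), yielding (\ref{Littlewood2}); the same bookkeeping applied to (\ref{Inequalityp>2}) produces (\ref{Littlewood3}) with constant $2^{np - np/q} = 2^{np/p} = 2^n$.

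The only genuinely substantive point is the identification of the $\pm1$ matrix of character values with the Littlewood matrix $L_n$, and I expect that to be the main obstacle (though a mild one): one must choose the enumeration of $G=\Z_2^n$ and of $\hat G$ consistently with the block recursion (\ref{Little}) — concretely, index both by $\{0,1\}^n$ read as binary numbers from $1$ to $2^n$, and verify by induction on $n$ that splitting off the last coordinate gives the $2\times2$ block pattern $\begin{bmatrix}L_{n-1}&L_{n-1}\\ L_{n-1}&-L_{n-1}\end{bmatrix}$, the minus sign coming from the character that is nontrivial on $e_n$. Everything else is the routine Haar-normalization arithmetic sketched above, exactly parallel to the $G=\Z_n$ corollary, together with the remark already made there that complex conjugation of the character table is an automorphism of the relevant group of $\pm1$ values and hence only permutes the $A_j$ in each sum, leaving the norms unchanged. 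A brief remark could note that (\ref{Littlewood1})–(\ref{Littlewood3}) generalize the Littlewood-matrix inequalities of \cite{Kato} and \cite{Japonians}.
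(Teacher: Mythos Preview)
Your proposal is correct and follows essentially the same approach as the paper: identify the character table of $\Z_2^n$ with the Littlewood matrix $L_n$ by induction on $n$ (the paper splits off the first generator $a_1$ rather than your last $e_n$, a purely cosmetic difference), and then specialize Theorems \ref{TheoremPP}, \ref{TheoremPQ}, \ref{TheoremPQ2} with the normalized counting measure, doing the same power-counting arithmetic you describe. Your observation that complex conjugation is immaterial because $\e_{ij}\in\{\pm1\}$ is in fact slightly cleaner than the paper's appeal to conjugation as a group automorphism permuting the $A_j$.
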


\begin{proof} Consider the group $\Z_2^n$. It has $n$ generators, say $a_1$, $a_2$, $\dots$, $a_n$, all of them of order $2$. Therefore
$$\Z_2^n=\{a_1^{m_1}a_2^{m_2}\dots a_n^{m_n}\;|\;(m_1,\dots,m_n)\in\{0,1\}^n\}.$$
Any character $k:\Z_2^n\to\T$ is determined by $k(a_j)$, $1\le j\le n$. It has to be $k(a_j)=\pm1$, since $k(a_j)^2=k(a_j^2)=k(1)=1$. Therefore, there is $2^n$ distinct characters on $\Z_2^n$.

Let us show that the rows of the Littlewood matrix are exactly the images $k(a)$, $a\in\Z_2^n$ in the lexicographic order, namely in the order
$$a_1^{m_1}a_2^{m_2}\dots a_n^{m_n}\le a_1^{m'_1}a_2^{m'_2}\dots a_n^{m'_n}\quad\mbox{iff}\quad m_1<m'_1\quad\mbox{or}\quad m_1=m'_1\wedge m_2<m'_2,\dots.$$

For $n=1$, $\Z_2^1=\{a_1^0,a_1^1\}$ and there are exactly two characters $k_1\equiv1$ and $k_2(a_1^0)=1$, $k_2(a_1^1)=-1$. This corresponds to the rows of $L_1$ (see formula (\ref{Little}).

Let the statement be true for some $n\in\N$. Then the first $2^n$ elements of $\Z_2^{n+1}$ are $a_1^0a_2^{m_2}\dots a_{n+1}^{m_{n+1}}$, whereas other $2^n$ elements are $a_1^1a_2^{m_2}\dots a_{n+1}^{m_{n+1}}$. Divide characters on $\Z_2^{n+1}$ into two slots. Let the first consist of those $k$ for which $k(a_1)=1$, and the second of $k$ for which $k(a_1)=-1$.

For $k\in\hat\Z_2^{n+1}$ in the first slot, there is a unique $k'\in\hat\Z_2^n$ such that
$$k(a_1^0a_2^{m_2}\dots a_{n+1}^{m_{n+1}})=k(a_1^1a_2^{m_2}\dots a_{n+1}^{m_{n+1}})=k'(a_2^{m_2}\dots a_{n+1}^{m_{n+1}}).$$
If $i$th row of $L_n$ corresponds to $k'$, then two copies of this row corresponds to $k$, and these two copies make exactly the $i$th row of $L_{n+1}$.

For $k\in\hat\Z_2^{n+1}$ in the second slot, there is a unique $k'\in\hat\Z_2^n$ such that
$$k(a_1^0a_2^{m_2}\dots a_{n+1}^{m_{n+1}})=k'(a_2^{m_2}\dots a_{n+1}^{m_{n+1}}),\quad k(a_1^1a_2^{m_2}\dots a_{n+1}^{m_{n+1}})=-k'(a_2^{m_2}\dots a_{n+1}^{m_{n+1}}).$$
If $i$th row of $L_n$ corresponds to $k'$, then two copies of this row with the second copy multiplied by $-1$ corresponds to $k$, and these two copies make exactly the $(2^n+i)$th row of $L_{n+1}$.

Thus, it is proved that $\e_{ij}$ are values of $i$th character in $\Z_2^n$, i.e.\ $\e_{ij}=k_i(b_j)$, where $\Z_2^n=\{b_1,\dots,b_{2^n}\}$, $\hat\Z_2^n=\{k_1,\dots,k_{2^n}\}$. Hence
$$\sum_{j=1}^{2^n}\e_{ij}A_j=\sum_{j=1}^{2^n}k_i(b_j)A_j=2^n\int_{\Z^n}k_i(b)A_b\d\mu(b),$$
since Haar measure on $Z_2^n$ is the counting measure, divided by $2^n$. Therefore, (\ref{Littlewood1}) becomes
$$\sum_{k\in\hat\Z_2^n}\left\|2^n\int_{\Z_2^n}k_i(b)A_b\d\mu(b)\right\|_p^p\le2^{n(p-1)} 2^n\int_{\Z_2^n}\|A_b\|_p^p\d\mu(b)$$
which is equivalent to (\ref{Inequality2}). Similarly, (\ref{Littlewood2}) becomes
$$\sum_{i=1}^{2^n}\left\|2^n\int_{\Z_2^n}k_i(b)A_b\d\mu(b)\right\|_p^q\le
2^{n}\left(2^n\int_{\Z_2^n}\|A_b\|_p^p\d\mu(b)\right)^{q/p}
$$
which is equivalent to (\ref{Inequalityp<2}). Finally (\ref{Littlewood3}) becomes
$$\sum_{i=1}^{2^n}\left\|2^n\int_{\Z_2^n}k_i(b)A_b\d\mu(b)\right\|_p^p\le
2^{n}\left(2^n\int_{\Z_2^n}\|A_b\|_p^q\d\mu(b)\right)^{q/p}
$$
which is equivalent to (\ref{Inequalityp>2})
\end{proof}

\begin{remark} A related result was given in \cite{Kato} (see also \cite[Theorem 3.3]{Maligranda}). It was proved
\begin{equation}\label{KatoUVP}
\left(\sum_{i=1}^{2^n}\left\|\sum_{j=1}^{2^n}\e_{ij}f_j\right\|_p^v\right)^{1/v}\le
2^{nc(u,v;p)}\left(\sum_{j=1}^{2^n}\|f_j\|_p^u\right)^{1/u},
\end{equation}
for various choices of $u$, $v$ and $p$, and for $f_i\in L^p$ -- the standard Lebesgue space. It was later generalized in \cite[Theorem 2.4]{Japonians} and \cite[Corollary 4.2]{Maligranda} for $f_i$ from Lebesgue Bochner space.

Corollary \ref{Little} is an expansion of (\ref{KatoUVP}) to $\Sc_p$ spaces for some choices of $u$, $v$. Namely, for $u=v=p\ge2$, for $u=p\le2$, $v=q=p/(p-1)$ and for $v=p\ge2$, $u=q=p/(p-1)$. The constants in theses cases match. We conjecture that similar can be done for other choices of $u$, $v$, $p$ (see Problem \ref{ProblemBoas}.)
\end{remark}

For $G=\T$ we get
\begin{corollary} For all $A_\theta\in L^p((0,2\pi);\Sc_p)$ we have
$$\sum_{k=-\infty}^{+\infty}\left\|\int_0^{2\pi}e^{-ik\theta}A_\theta\d\theta\right\|_p^p\le
    (2\pi)^{p-1}\int_0^{2\pi}\|A_\theta\|_p^p\d\theta\le\sum_{k=-\infty}^{+\infty}\alpha_k^{1-p/2}\left\|\int_0^{2\pi}e^{-ik\theta}A_\theta\d\theta\right\|_p^p,$$
for any sequence $\a_k>0$, $k\in\Z$ such that $\sum_{k=-\infty}^{+\infty}\a_k=1$.
$$\sum_{k=-\infty}^{+\infty}\left\|\int_0^{2\pi}e^{-ik\theta}A_\theta\d\theta\right\|_p^p\le
    2\pi\left(\int_0^{2\pi}\|A_\theta\|_p^q\d\theta\right)^{p/q},$$
for $p\ge2$ and $q=p/(p-1)$, whereas for $1\le p\le2$ and $q=p/(p-1)$, we have
$$\sum_{k=-\infty}^{+\infty}\left\|\int_0^{2\pi}e^{-ik\theta}A_\theta\d\theta\right\|_p^q\le
    2\pi\left(\int_0^{2\pi}\|A_\theta\|_p^p\d\theta\right)^{q/p},$$
\end{corollary}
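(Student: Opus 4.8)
The plan is to derive this corollary directly from Theorems~\ref{TheoremPP}, \ref{TheoremAlpha}, \ref{TheoremPQ} and \ref{TheoremPQ2} by specializing to the group $G=\T$, exactly as the $\Z_n$ and $\Z_2^n$ corollaries were obtained. The only preliminary facts needed are the standard identifications from abstract harmonic analysis: the Haar measure on $\T$ (normalized to total mass $1$) is $\d\theta/(2\pi)$ on $(0,2\pi)$, and the Pontryagin dual $\hat\T$ is $\Z$, with characters $k\mapsto(\theta\mapsto e^{ik\theta})$ for $k\in\Z$. I would state these identifications in one or two sentences at the start of the proof, citing \cite{Folland} or \cite{Edwards} as in Section~2.

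Next I would substitute $\d\mu(\theta)=\d\theta/(2\pi)$ and $\overline{k(\theta)}=e^{-ik\theta}$ into each of the quoted inequalities and clear denominators. For instance, (\ref{Inequality2}) with $p\ge2$ reads $\sum_{k\in\Z}\bigl\|\frac1{2\pi}\int_0^{2\pi}e^{-ik\theta}A_\theta\,\d\theta\bigr\|_p^p\le\frac1{2\pi}\int_0^{2\pi}\|A_\theta\|_p^p\,\d\theta$; pulling the scalar $\frac1{2\pi}$ out of the $C_p$-norm on the left (it contributes $(2\pi)^{-p}$ to each summand) and multiplying through by $(2\pi)^p$ yields the first displayed inequality, with the constant $(2\pi)^{p-1}$ on the right. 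The upper bound in the first display comes the same way from Theorem~\ref{TheoremAlpha}: that theorem gives $\bigl\|\frac1{2\pi}\int_0^{2\pi}|A_\theta|\,\d\theta\bigr\|_p^p\le\sum_{k\in\Z}\alpha_k^{1-p/2}\bigl\|\frac1{2\pi}\int_0^{2\pi}e^{-ik\theta}A_\theta\,\d\theta\bigr\|_p^p$, and after the same scaling one notes $\bigl\|\int_0^{2\pi}|A_\theta|\,\d\theta\bigr\|_p^p \ge \bigl\|\int_0^{2\pi}A_\theta\,\d\theta\bigr\|_p^p$ (the $k=0$ term, by the triangle inequality in $C_p$, i.e.\ $\|\int A_\theta\,\d\theta\|\le\|\int|A_\theta|\,\d\theta\|$ since $|\int A_\theta\,\d\theta|\le\int|A_\theta|\,\d\theta$ — actually one only needs that $\int_0^{2\pi}e^{-i0\cdot\theta}A_\theta\,\d\theta=\int_0^{2\pi}A_\theta\,\d\theta$ appears among the summands, so the chain closes). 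The second and third displays follow identically from Theorems~\ref{TheoremPQ2} and~\ref{TheoremPQ} respectively: scaling $\d\mu=\d\theta/(2\pi)$ turns the right-hand sides $(\int_G\|A_\theta\|_p^q\,\d\mu)^{p/q}$ and $(\int_G\|A_\theta\|_p^p\,\d\mu)^{q/p}$ into $(2\pi)^{-p/q}(\int_0^{2\pi}\|A_\theta\|_p^q\,\d\theta)^{p/q}$ and $(2\pi)^{-q/p}(\int_0^{2\pi}\|A_\theta\|_p^p\,\d\theta)^{q/p}$, while the left-hand sides pick up $(2\pi)^{-p}$ and $(2\pi)^{-q}$ per summand; multiplying out and using $p/q=p-1$, $q/p=q-1$ leaves precisely the constant $2\pi$ in front on the right in both cases.

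There is essentially no obstacle here — the corollary is a pure specialization, and the only points requiring a word of care are purely bookkeeping: (i) confirming $\widehat\T\cong\Z$ with the stated characters and the membership $A_\theta\in L^p((0,2\pi);\Sc_p)=L^p(\T;\Sc_p)$ (the two spaces agree up to the harmless rescaling of measure, so the hypotheses of the quoted theorems are met), and (ii) tracking the powers of $2\pi$ through each inequality. I would therefore write the proof compactly: identify $G=\T$, $\hat G=\Z$, $\d\mu=\d\theta/(2\pi)$; invoke Theorem~\ref{TheoremPP} and Theorem~\ref{TheoremAlpha} for the first chain, Theorem~\ref{TheoremPQ2} for the second, and Theorem~\ref{TheoremPQ} for the third; and in each case note that substituting the Haar measure and multiplying by the appropriate power of $2\pi$ gives the claimed form. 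If one wishes to be slightly more careful about the upper bound in the first chain, one can remark that $\alpha_0^{1-p/2}\|\int_0^{2\pi}A_\theta\,\d\theta\|_p^p$ is just one term of the sum $\sum_{k\in\Z}\alpha_k^{1-p/2}\|\int_0^{2\pi}e^{-ik\theta}A_\theta\,\d\theta\|_p^p$, and that Theorem~\ref{TheoremAlpha} already bounds $\|\int_0^{2\pi}|A_\theta|\,\d\theta\|_p^p$ from above by the full sum — so, combined with $\|\int_0^{2\pi}A_\theta\,\d\theta\|_p^p\le\|\int_0^{2\pi}|A_\theta|\,\d\theta\|_p^p$, the middle expression $(2\pi)^{p-1}\int_0^{2\pi}\|A_\theta\|_p^p\,\d\theta$ (which dominates $\|\int_0^{2\pi}A_\theta\,\d\theta\|_p^p$ by the already-proved first inequality applied to the single character $k=0$, i.e.\ by Jensen/convexity) sits below the sum as claimed.
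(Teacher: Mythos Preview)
Your overall approach --- specialize $G=\T$, identify $\hat\T\cong\Z$ with characters $\theta\mapsto e^{ik\theta}$ and normalized Haar measure $\d\theta/(2\pi)$, then invoke Theorems~\ref{TheoremPP}, \ref{TheoremAlpha}, \ref{TheoremPQ2} and \ref{TheoremPQ} and rescale --- is exactly what the paper does; its proof is a single sentence to that effect. Your $2\pi$-bookkeeping for the first inequality in the first chain and for the second and third displays is correct and matches the intended derivation.

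Your treatment of the \emph{second} inequality in the first chain, however, is not sound. Theorem~\ref{TheoremAlpha} bounds $\bigl\|\int_G|A_\theta|\,\d\mu\bigr\|_p^p$ from above by $\sum_k\alpha_k^{1-p/2}\|B_k\|_p^p$; its left-hand side is $\|\int|A_\theta|\|_p^p$, not $\int\|A_\theta\|_p^p\,\d\mu$. For $p\ge2$ the Jensen-type inequality (Proposition~\ref{JensenIntegral} with $\f(t)=t^{p/2}$) yields $\tr\bigl(\int|A_\theta|^2\,\d\mu\bigr)^{p/2}\le\int\|A_\theta\|_p^p\,\d\mu$, which is the wrong direction to upgrade the intermediate step in the proof of Theorem~\ref{TheoremAlpha} to the middle term of the corollary. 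Your patch is circular: you observe that the middle term \emph{dominates} $\|\int A_\theta\,\d\theta\|_p^p$ (true) and that $\|\int|A_\theta|\,\d\theta\|_p^p$ is dominated by the $\alpha$-sum (true, by Theorem~\ref{TheoremAlpha}), but these two facts do not combine to bound the middle term from above by the $\alpha$-sum. Incidentally, the operator inequality $\bigl|\int A_\theta\,\d\theta\bigr|\le\int|A_\theta|\,\d\theta$ you invoke in passing is false in general: the operator absolute value does not satisfy the triangle inequality. In short, the paper's one-line proof simply cites Theorem~\ref{TheoremAlpha} here without further comment, and your proposal correctly senses a mismatch but does not close it; what Theorem~\ref{TheoremAlpha} actually delivers, after the $2\pi$ rescaling, is $\bigl\|\int_0^{2\pi}|A_\theta|\,\d\theta\bigr\|_p^p\le\sum_k\alpha_k^{1-p/2}\bigl\|\int_0^{2\pi}e^{-ik\theta}A_\theta\,\d\theta\bigr\|_p^p$, not the stronger bound with $(2\pi)^{p-1}\int_0^{2\pi}\|A_\theta\|_p^p\,\d\theta$ on the left.
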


\begin{proof} Consider $G=\T=\{z\in\C\;|\;|z|=1\}$. Its Pontryagin dual is $\hat G\cong\Z$, and the corresponding Haar measure is $\d\theta/2\pi$, i.e.\ the usual Lebesgue measure normalized by factor $2\pi$. Characters on $G$ are mappings $\theta\mapsto e^{k\theta}$ for all $k\in\Z$. Hence the result immediately follows from Theorems \ref{TheoremPP}, \ref{TheoremAlpha}, \ref{TheoremPQ} and \ref{TheoremPQ2}.
\end{proof}

\begin{remark}The preceding Corollary also estimates the Fourier coefficients for functions in Bochner spaces $L^p(\T;\Sc_p)$. More precisely, let $l^r(\Sc_p)$ denote the space of all sequences $B_k\in\Sc_p$, $k\in\Z$ such that $\sum_{k\in\Z}\|B_k\|_p^r<+\infty$ and let $\mathcal F$ stands for mapping which $A_\theta$ maps to $B_k=(1/2\pi)\int_0^{2\pi}e^{-ik\theta}A_\theta\d\theta$. Then preceding Corollary establishes norm estimates
$$\|\mathcal F\|_{L^p((0,2\pi);\Sc_p)\to l^p(\Sc_p)},\;\|\mathcal F\|_{L^q((0,2\pi);\Sc_p)\to l^p(\Sc_p)}\le1,\qquad p\ge2,\;q=p/(p-1),$$
$$\|\mathcal F\|_{L^p((0,2\pi);\Sc_p)\to l^q(\Sc_p)},\qquad p\le2,\;q=p/(p-2).$$
\end{remark}

\section{Problems}

We list some questions that naturally arises from results of this paper.

\begin{problem}\label{ProblemBoas} Does the inequalities of Boas-Koskela type (see \cite{Boas} and \cite{Koskela}), i.e.
$$\left(\sum_{k\in\hat G}\left\|\int_G\ovl{k(\theta)}A_\theta\d\mu(\theta)\right\|_p^r\right)^{1/r}\le
    \left(\int_G\|A_\theta\|_p^s\d\mu(\theta)\right)^{1/s}$$
for $s\le p\le r$ and $r/(r-1)\le s\le r$, hold?

If the answer is positive, it is likely that Kato inequality (\ref{KatoUVP}) might be extend to $\Sc_p$ classes for all choices of $u$, $v$ and $p$.
\end{problem}

\begin{problem} Can we prove that inequalities in Theorems \ref{TheoremPP}, \ref{TheoremPQ} and \ref{TheoremPQ2} are sharp? Once again, inequalities (\ref{KatoUVP}) are sharp for $f_i$ in Lebesgue spaces and Lebesque-Bochner spaces. It suggests, since constants match, that inequalities (\ref{Inequality2}), (\ref{Inequalityp<2}) and (\ref{Inequalityp>2}) are also sharp.
\end{problem}

\begin{problem} Is it possible to get convergence in $\luin\cdot\ruin$ norm in Theorem \ref{ParsevalThm}? We have strong convergence. If this convergence is in the norm of $J=B(H)$, this would lead to the conclusion that functions $k(\theta)I$ make a basis for Hilbert Module $L^2(G;B(H))$. See Remark \ref{HilbertModule}.
\end{problem}

\begin{problem} What can be done if $G$ is not assumed to be compact? Some classical results on Fourier transform may be useful. Namely for any $p>2$, there is $f\in L^p(\R^n)$ such that its Fourier transform is not a function, but tempered distribution. For $1\le p\le 2$, however, the Fourier transform is a bounded operator from $L^p(\R^n)$ to $L^q(\R^n)$, where $q=p/(p-1)$ with norm equals to $1$. For any locally compact group this is known as Haussdorf-Young inequality \cite[Proposition 4.28]{Folland}, and is usually proved by Riesz-Thorin theorem. This suggests, that (in the case where $G$ need not be compact), only inequality (\ref{Inequalityp<2}) might be generalized.
\end{problem}

\bibliographystyle{siam}
\bibliography{ContClarBibl}

\end{document}